\documentclass[final]{siamltex}

\usepackage{amsmath}
\usepackage{mathrsfs}
\usepackage{amsfonts}
\usepackage{amssymb}
\usepackage{graphicx}
\usepackage{psfrag}
\usepackage{placeins}
\usepackage{epsfig}
\usepackage{amsopn}
\usepackage{amstext}
\usepackage{amssymb}
\usepackage{amscd}
\usepackage{latexsym}
\allowdisplaybreaks
\usepackage{geometry}
\usepackage{multirow}
\usepackage{diagbox}
\usepackage{color}

\newtheorem{rem}[theorem]{Remark}





\newcommand{\bbm}{\begin{bmatrix}}
	\newcommand{\ebm}{\end{bmatrix}}

\usepackage{subfigure}
\usepackage{graphicx}
\usepackage{float}
\usepackage[numbers,sort&compress]{natbib}
\begin{document}

\title{Two-Grid Domain Decomposition Methods  for the Coupled Stokes-Darcy System}
\author{
Yizhong Sun \thanks{%
School of Mathematical Sciences, East China Normal University,
Shanghai, P.R.China. \texttt{bill950204@126.com}.}
\and Feng Shi \thanks{%
College of Science, Harbin Institute of Technology, Shenzhen,
P.R.China. \texttt{shi.feng@hit.edu.cn}.}
\and Haibiao Zheng \thanks{%
School of Mathematical Sciences, East China Normal University,
Shanghai Key Laboratory of Pure Mathematics and Mathematical
Practice, Shanghai, P.R. China. \texttt{hbzheng@math.ecnu.edu.cn}.
Partially supported by   NSF of China (Grant No. 11971174) and NSF
of Shanghai (Grant No. 19ZR1414300) and Science and Technology
Commission of Shanghai Municipality (Grant No. 18dz2271000 and No. 19JC1420102). }
\and Heng Li \thanks{%
School of Mathematical Sciences, East China Normal University,
Shanghai, P.R.China. }
\and Fan Wang \thanks{%
School of Information Science and Engineering, Qufu Normal University,
Rizhao, P.R.China. }}
\maketitle

\begin{abstract}
In this paper, we propose two novel Robin-type domain decomposition methods based on the two-grid techniques for the coupled Stokes-Darcy system. Our schemes firstly adopt the existing Robin-type domain decomposition algorithm to obtain the coarse grid approximate solutions. Then two modified domain decomposition methods are further constructed on the fine grid by utilizing the framework of two-grid methods to enhance computational efficiency, via replacing some interface terms by the coarse grid information. The natural idea of using the two-grid frame to optimize the domain decomposition method inherits the best features of both methods and can overcome some of the domain decomposition deficits. The resulting schemes can be implemented easily using many existing mature solvers or codes in a flexible way, which are much effective under smaller mesh sizes or some realistic physical parameters. Moreover, several error estimates are carried out to show the stability and convergence of the schemes. Finally, three numerical experiments are performed and compared with the classical two-grid method, which verifies the validation and efficiency of the proposed algorithms.
\end{abstract}

\begin{keywords}
 Stokes-Darcy, Robin-type Domain Decomposition, Two-grid Technique, Parallel Computation.
\end{keywords}
\begin{AMS}
65M55, 65M60
\end{AMS}

\section{Introduction}
Multi-domain, multi-physics coupled problems have arisen in many natural and industrial applications, such as the groundwater fluid flow in the karst aquifer, material in industrial filtration, blood flow motion in the arteries to name just a few. In this paper, we mainly focus on the coupled Stokes-Darcy system, which couples one free fluid flow  (formulated as Stokes equations) with one porous medium flow (described by Darcy's law) by an interface between two subdomains. Many researchers have presented several efficient methods for solving such a coupled system. Especially, decoupling methods allow us to decompose the coupled problems into several independent subproblems, which can be solved simultaneously with some ``legacy" algorithms and codes developed for the Stokes and Darcy equations respectively. We can refer to such decoupling methods as  Lagrange multiplier methods \cite{LMLayton2003, LMGatica03, LMGatica2012}, stabilized mixed finite element methods
\cite{JP2020, Mahbub2019, Mahbub2020}, domain decomposition methods \cite{Discacciati07, Chen, Cao14, Boubendir, Cao11, He15, Vassilev1, Jiang}, two-grid or multi-grid methods \cite{Mu, Cai2009, Zuo14, Hou16,Zuo18}, and optimized Schwarz methods \cite{Discacciati, Gander} and so on.

Among them, domain decomposition methods (DDMs) are much popular ways to deal with large multi-physics problems due it has many off-the-shelf and efficient solvers for each decoupled subproblems \cite{Discacciati07, Chen, Cao14, Boubendir, Cao11, He15, Vassilev1, Jiang}. Based on the characteristics of high precision and easy-to-operation, DDMs have received extensive attention and applications undoubtedly. By introducing the Robin-type interface boundary conditions, DDMs can decouple the multi-domain, multi-physics problems naturally \cite{Discacciati07, Vassilev1}. In particular, Robin-type DDMs were proposed in \cite{Discacciati07} for the Stokes-Darcy coupled problems. Later, Chen et al. \cite{Chen} extended the works of \cite{Discacciati07} and achieved the explicit convergence rates of the order $(1-Ch)$, where $h$ is the mesh size and $C$ is a positive constant. Interestingly, they also obtained $h$-independent convergence result by choosing parameters value appropriately. But the above Robin-type domain decomposition methods have some major drawbacks, which require more iterative steps to solve relatively finer mesh $h$ causes enormous computational cost.  On the other hand, the $h$-independent convergence for the DDMs also requires increasing CPU time to resolve the fine grid solutions for decreasing mesh size $h$.  To overcome these difficulties, we compute the fine grid problems in a decoupling way based on the coarse grid solutions in the framework of two-grid methods. Hence two-types of novel optimized domain decomposition methods are based on the two-grid techniques referred to as ``Two-grid Domain Decomposition methods (TGDDMs)" to improve the computational efficiency significantly.

The basic feature of the proposed TGDDMs algorithms is to apply the parallel Robin-Robin type domain decomposition method to solve iteratively certain steps of the coarse grid problem, and then compute a modified domain decomposition method without any iteration on the fine grid to get the final approximate solution. In this process, some interface terms need to be substituted by the approximate coarse grid solutions. The ideas of utilizing the classical two-grid (CTG) techniques to optimize DDMs not only retain the best features of both methods, but also overcome some of DDMs deficits. Here the coupled Stokes-Darcy problem is naturally decoupled into two independent parallel subproblems. Hence such a physics-based domain decomposition approach of TGDDMs in the coarse grid is more attractive and natural than the CTG methods. On the other hand, the proposed novel TGDDMs can overcome the computational complexity faced by using DDMs for the fine grid approximation which reduce computational significantly.

In this manuscript, firstly, we design a novel \emph{TGDDM1} algorithm, which only requires a few data ($g_{S,H}$ and $g_{D,H}$) from the coarse mesh but may lose minimal accuracy due to some internal factors of interface terms. Then a special variant of \emph{TGDDM1} is proposed, named \emph{TGDDM2}, which is still a modified version of DDMs. The feature of \emph{TGDDM2} requires more data ($\overrightarrow{u_{S,H}}, p_{S,H}, g_{S,H}, g_{D,H}$) from the coarse mesh, with the meaningful profit of improving the error estimates. The most interesting thing is that the error estimates of \emph{TGDDM2} are almost equivalent to the analysis of the CTG methods in \cite{Mu,Hou16}. In \cite{Mu}, Mu et al. were first to get the solutions of the coupled Stokes-Darcy problem on the coarse grid, then two decoupled sub-problems were solved on the fine grid. They achieved $O(h^{\frac{3}{4}})$ for $H=\sqrt{h}$ due to some technical reasons. Later, Hou. \cite{Hou16} achieved optimal error estimate for the sufficiently smooth interface and necessarily introducing an auxiliary function. Note that the idea of \emph{TGDDM2} is different from \cite{Mu,Hou16} since the essence of \emph{TGDDM2} is DDMs only utilizing a two-grid framework, while using DDM on the coarse grid and One-step modified DDM on the fine grid.
We perform three exclusive numerical experiments to validate the proposed methods and illustrate optimal  approximate accuracy, complicated flow characteristics, streamlines, magnitudes, pressure contour, mass conservation, and CPU performance in the pseudo-realistic computational domain for the Stokes-Darcy model. More importantly, we observe that our algorithms are more effective to simulate some coupled problems with smaller $h$ or some realistic physical parameters than the CTG methods of \cite{Mu,Hou16}.

The rest of the paper is organized as follows. In Section 2, we recall the well-known Stokes-Darcy coupled system with the Beavers-Joseph-Saffman interface conditions and related notations. In Section 3, we review the parallel Robin-Robin domain decomposition algorithm and its convergence results.  Section 4 proposes two novel two-grid domain decomposition methods (TGDDMs) and derive their error estimates. In Section 5, several numerical experiments, including 2D and 3D cases, are performed to show the optimization effect and the efficiency of our TGDDMs. Moreover, some comparisons with DDMs and CTG methods are carried out to show the stability and validity of the proposed algorithms. Finally, we summarize our present work and report some future research directions in Section 6.

\section{The Coupled Stokes-Darcy System}

Consider the coupled Stokes-Darcy model on two adjoint bounded domains in $R^{d}(d=2~\mathrm{or}~3)$, denoted by $\Omega _{S}$ for fluid flow, and $\Omega_{D}$ for porous media flow, with an interface $\Gamma$, namely $\Omega _{S}\cap \Omega _{D}={\O }$,  $\overline{\Omega }_{S}\cap\overline{\Omega }_{D}=\Gamma $, and $\overline{\Omega }=\overline{\Omega }_{S}\cup \overline{\Omega }_{D}$. We also refer  $\overrightarrow{n_{S}}$ and $\overrightarrow{n_{D}}$ as the unit outward normal vectors on $\partial \Omega _{S}$ and $\partial \Omega _{D}$, respectively, and  represent  the unit tangential vectors on the interface $\Gamma $  by $\overrightarrow{\tau_{i}},i=1,\cdots ,d-1$. Noting that $\overrightarrow{n_{S}}=-\overrightarrow{n_{D}}~\mathrm{on}~\Gamma $.
We also define by $\Gamma_{S}=\partial \Omega _{S}\setminus \Gamma ,\Gamma _{D}=\partial \Omega_{D}\setminus \Gamma $.  A sketch of the problem domain and its boundaries  is displayed in Fig. \ref{domain2}.

\begin{figure}[htbp]
\centering
\includegraphics[width=90mm,height=60mm]{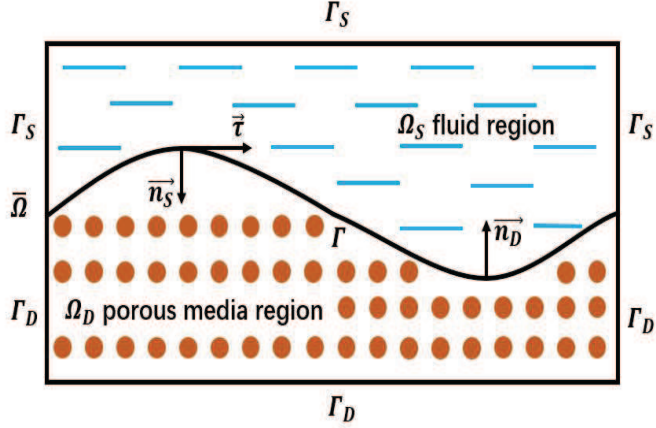}
\caption{The global domain $\overline{\Omega}$ consisting of the fluid region
$\Omega_S$ and the porous media region $\Omega_D$ separated by the
interface $\Gamma$.}\label{domain2}
\end{figure}

In the fluid region $\Omega_{S} $, the fluid velocity $\overrightarrow{u_{S}}$ and kinematic pressure $p_{S}$
are assumed to satisfy the Stokes equations:
\begin{eqnarray}
 -\nabla \cdot \mathbb{T}(\overrightarrow{u_{S}},p_S)&=&\overrightarrow{f_{S}}\ \ \ \ \mathrm{in}~\Omega _{S},
\label{Stokes1} \\
\nabla \cdot \overrightarrow{u_{S}} &=&0\ \ \ \ \hspace{0.8mm} \ \mathrm{in}~\Omega _{S},\label{Stokes2}
\end{eqnarray}
where $\mathbb{T}(\overrightarrow{u_{S}},p_S)=-p_S\mathbb{I}+2\nu\mathbb{D}(\overrightarrow{u_{S}})$ is the stress tensor, with $\mathbb{D}( \overrightarrow{u_{S}})=\frac{1}{2}(\nabla \overrightarrow{u_{S}}+(\nabla \overrightarrow{u_{S}})^{T})$ representing the deformation tensor,  $\mathbb{I}$ denotes the  identity matrix and $\nu $ is the kinematic viscosity of the fluid flow. Besides, $\overrightarrow{f_S}$ expresses the external body force.

In the porous media region $\Omega_{D} $, the flow
is governed by the mixed Darcy equations for the
fluid velocity $\overrightarrow{u_{D}}$ and the
hydraulic head $\phi_{D}$:
\begin{eqnarray}
\overrightarrow{u_{D}}&=&-\mathbb{K}\nabla \phi_{D}\ \ \ \ \hspace{2mm}
\mathrm{in}~\Omega_{D} ,  \label{Darcy1} \\
 \nabla \cdot \overrightarrow{u_{D}} &=&f_{D}\ \ \ \ \hspace{11mm}%
\mathrm{in}~\Omega _{D}, \label{Darcy2}
\end{eqnarray}
where $\mathbb{K} $ is the hydraulic conductivity tensor, and  for simplicity, to be assumed as  $\mathbb{K}=k\mathbb{I}$ with some constant $k$, and the identity matrix $\mathbb{I}$. In addition, $f_{D} $ is a sink/source term, and the piezometric head $\phi_D$ can be described as
$\phi_D=z+\frac{p_D}{\rho g}$, with the height $z$, the dynamic pressure $p_D$,
the density $\rho$, and the gravitational acceleration $g$. By eliminating $\overrightarrow{u_D}$, an equation of the second-order for the hydraulic head $\phi_{D}$ can be obtained:
\begin{eqnarray}
 \nabla \cdot (-\mathbb{K}\nabla \phi_{D}) = f_{D}\ \ \ \ \
\mathrm{in}~\Omega_{D}.  \label{Darcy}
\end{eqnarray}

For the simplicity of presentation, the impermeable boundary on $\partial\Omega\setminus\Gamma$ is assumed, i.e., the fluid velocity $\overrightarrow{u_{S}} $ and the hydraulic head $\phi_{D}$ satisfy homogeneous Dirichlet boundary conditions:
\begin{eqnarray*}
\overrightarrow{u_{S}}=0~~\mathrm{on}~\Gamma _{S}, \hspace{10mm}
\phi_{D}=0~~\mathrm{on}~\Gamma _{D}.
\end{eqnarray*}
It is worth noting that the present schemes and numerical analysis are easily extended to the non-homogeneous case.

On the interface $\Gamma$, the coupling conditions usually consist of  the conservation of mass, the balance
of forces and the tangential conditions related to the normal stress force.
In this paper, we impose the Beavers-Joseph-Saffman (BJS) on the interface $\Gamma $,
see \cite{Beavers, Saffman, Jones}. Therefore, the interface coupling conditions are assumed as:
\begin{eqnarray}
\overrightarrow{u_{S}}\cdot \overrightarrow{n_{S}}&=&\mathbb{K}\nabla \phi_{D}\cdot
\overrightarrow{n_{D}}\ \ \ \ \ \ \ \ \ \hspace{8mm} \mathrm{on}~\Gamma ,  \label{interface1} \\
-\overrightarrow{n_{S}}\cdot (\mathbb{T}(\overrightarrow{u_{S}},p_S)\cdot \overrightarrow{n_{S}})
&=&  g(\phi_{D}-z) \ \ \ \ \ \ \  \ \ \hspace{10mm} \mathrm{on}~\Gamma, \label{interface2}\\
-\overrightarrow{\tau_{i}}\cdot(\mathbb{T}(\overrightarrow{u_{S}},p_S)\cdot \overrightarrow{n_{S}})
&=&\frac{\nu\alpha\sqrt{d}}{\sqrt{\mathrm{trace}(\Pi)}} \overrightarrow{\tau _{i}} \cdot \overrightarrow{u_{S}} \hspace{8mm} 1\leq i \leq d-1 \hspace{8mm} \mathrm{on}~\Gamma,\label{interface3BJS}
\end{eqnarray}%
where $\alpha>0$ represents a dimensionless constant, and $\Pi =\frac{\mathbb{K}\nu}{g} $. For the third condition, we mention here that  more physically realistic Beavers-Joseph (BJ) conditions \cite{CGHW08,Cao10}, are also applicable:
\begin{eqnarray*}
-\overrightarrow{\tau_{i}}\cdot(\mathbb{T}(\overrightarrow{u_{S}},p_S)\cdot \overrightarrow{n_{S}})
=\frac{\nu\alpha\sqrt{d}}{\sqrt{\mathrm{trace}(\Pi)}}
\overrightarrow{\tau _{i}} \cdot (\overrightarrow{u_{S}}
-\overrightarrow{u_{D}} )\hspace{8mm} 1\leq i \leq d-1 \hspace{8mm} \mathrm{on}~\Gamma.
\end{eqnarray*}
However, in practice, the term $\overrightarrow{\tau _{i}} \cdot \overrightarrow{u_{D}}$ is usually neglected compared with $\overrightarrow{\tau _{i}} \cdot \overrightarrow{u_{S}}$, since it is much smaller than the latter.

In the following, we introduce some Sobolev spaces and notations, see for instance  \cite{Sobolev}:
\begin{eqnarray*}
\mathbf{X}_S&:=&\{\overrightarrow{v_S}\in H^1(\Omega_S)^d : \overrightarrow{v_S}=0~ \mathrm{on}~ \Gamma_S\},\\
Q_S&:=&L^2(\Omega_S),\\
{X}_D&:=&\{\phi_D\in H^1(\Omega_D) : \phi_D=0~ \mathrm{on}~ \Gamma_D\}.
\end{eqnarray*}
For the  subdomains $\Omega_S$ and $\Omega_D$, we use $||\cdot||$ and $(\cdot, \cdot)$ to denote the $L^2(\Omega_{S/D})$ norm and inner product, respectively. Moreover, the $H^1(\Omega_{S/D})$ norm is denoted by $||\cdot||_1$. On the interface $\Gamma$, the $L^2(\Gamma)$ inner product is represented as $\langle \cdot,\cdot \rangle$, and the corresponding norm is defined as $||\cdot||_{\Gamma}$.

\section{The Robin-Robin Domain Decomposition Algorithm}
The coupled Stokes-Darcy system is split into two parallel subproblems between the fluid flow $\Omega_S$ region and porous media flow $\Omega_D$ subdomain through well-known domain decomposition method (DDM). In this section, we recall some details of  the Robin-Robin domain decomposition method  proposed by Chen et al. \cite{Chen} for the Stokes-Darcy model. The authors defined two Robin conditions for the Stokes system and the Darcy equation in the following way:

For two given constants  $\delta_{S}, \delta_{D}>0$, there exists two corresponding functions $g_S$ and $g_D$  on the interface $\Gamma$, such that
\begin{eqnarray}
\overrightarrow{n_{S}}\cdot (\mathbb{T}(\overrightarrow{u_{S}},p_S)\cdot \overrightarrow{n_{S}})
+\delta_S \overrightarrow{u_{S}}\cdot \overrightarrow{n_{S}}=g_S,\label{Robintype1} \\
\delta_D \mathbb{K} \nabla \phi_D
\cdot \overrightarrow{n_{D}}+g\phi_D=g_D,\label{Robintype2}
\end{eqnarray}
with the following compatibility conditions:
\begin{eqnarray}
g_S &=& \delta_S \overrightarrow{u_S} \cdot \overrightarrow{n_S}-g\phi_D+gz \hspace{5mm} \mathrm{on}~ \Gamma, \label{comp1}\\
g_D &=& \delta_D \overrightarrow{u_S} \cdot \overrightarrow{n_S}+g\phi_D \hspace{12.2mm} \mathrm{on}~ \Gamma,
\label{comp2}
\end{eqnarray}
Then,  the original interface conditions
(\ref{interface1})-(\ref{interface2}) and the  two newly constructed Robin-type conditions (\ref{Robintype1})-(\ref{Robintype2})  above are equivalent.

The corresponding weak formulation for the decoupled Stokes-Darcy problem with Robin-type boundary conditions (\ref{Robintype1})-(\ref{Robintype2}) can be reformulated as: for
two given functions $g_S,\ g_D\in L^2(\Gamma)$ and two positive constants $\delta_S, \delta_D$, finding $(\overrightarrow{u_{S}},p_S)\in  (\mathbf{X}_{S}, Q_{S})$ and $\phi_D \in \mathbf{X}_D$ such that
\begin{eqnarray}\label{DDMweak}
a_{S}(\overrightarrow{u_{S}},\overrightarrow{v_{S}})-b_{S}(\overrightarrow{v_{S}},{p_S})
+\delta_S\langle\overrightarrow{u_{S}}\cdot \overrightarrow{n_S},
\overrightarrow{v_{S}}\cdot \overrightarrow{n_S}\rangle
&+&\sum_{i=1}^{d-1}\frac{\nu\alpha\sqrt{d}}{\sqrt{\mathrm{trace}(\Pi)}}
\langle\overrightarrow{u_{S}}\cdot \overrightarrow{\tau _{i}},
\overrightarrow{v_{S}}\cdot \overrightarrow{\tau _{i}}\rangle\nonumber\\
\hspace{16mm} &=& \langle{g}_{S},\overrightarrow{v_{S}}\cdot \overrightarrow{n_S}\rangle
+(\overrightarrow{f_{S}},\overrightarrow{v_{S}})
\hspace{10mm}  \forall ~\overrightarrow{v_{S}}\in \mathbf{X}_{S}, \\
\label{DDMweak-2}\ \ \ \ \  \ \ \ \ \ \ \ \
b_{S}(\overrightarrow{u_{S}},q)&=&0 \hspace{42.7mm} \forall ~q\in Q_{S},\\
\label{DDMweak-3}\ \
\delta_D a_{D}(\phi_D,\psi) + \langle g \phi_D,\psi\rangle
&=& \langle{g}_{D}, \psi\rangle+ \delta_D (f_{D},\psi)
\hspace{14.4mm} \forall ~\psi \in \mathbf{X}_{D},
\end{eqnarray}
with the   compatibility conditions (\ref{comp1})-(\ref{comp2}). To this end, some bilinear forms are introduced:
\begin{eqnarray*}
a_{S}(\overrightarrow{u_{S}},\overrightarrow{v_{S}})
&:=& 2\nu (\mathbb{D}(\overrightarrow{u_{S}}),\mathbb{D}(\overrightarrow{v_{S}})), \\
b_{S}(\overrightarrow{v_{S}},q)&:=& (\nabla \cdot \overrightarrow{v_{S}},q),\\
a_{D}(\phi_D,\psi)&:=& (\mathbb{K}\nabla \phi_D,\nabla \psi).
\end{eqnarray*}
Chen et al. proved the well-posedness of above weak formulation  (\ref{DDMweak})-(\ref{DDMweak-3}) in  \cite{Chen}.

The following finite element discretization needs to be considered for the Robin-Robin domain decomposition methods. Let $\mathcal{T}_{h}$ be a regular and quasi-uniform triangulation of $ \overline{\Omega}$ with the mesh size $h>0$ , which consists of two consistent triangulations $\mathcal{T}_{S,h}$ and $\mathcal{T}_{D,h}$ for the subdomains $ \Omega_{S} $ and $ \Omega_{D} $, respectively. Moreover, we denote by $\mathcal{B}_{h}$ the partition of $\Gamma$ induced from $\mathcal{T}_{h}$.

Then we define the finite element spaces $ \mathbf{X}_{S,h} \subset \mathbf{X}_{S}, Q_{S,h}\subset Q_{S}$ and  $ {X}_{D,h} \subset {X}_{D}$ for approximating the fluid velocity and pressure in the free flow region, and the piezometric head in the porous medium flow, respectively. Actually, one popular choice is applying the \emph{P2-P1} finite elements for Stokes problem, and matching \emph{P2} finite elements for Darcy problem \cite{Chen}. In details, the finite element spaces used here can be defined by:
\begin{eqnarray*}
\mathbf{X}_{S,h}&:=&\{\overrightarrow{{v}_{S,h}}\in (H^{1}(\Omega _{S}))^{d}: \ \ %
\overrightarrow{{v}_{S,h}}|_{T}\in (\mathbb{P}_{2}(T))^{d} \ \ %
\forall ~T\in \mathcal{T}_{S,h}, \ \ %
\overrightarrow{{v}_{S,h}}|_{\Gamma_{S} }= {0} \},\\
{Q}_{S,h}&:=&\{{q}_{S,h}\in L^{2}(\Omega _{S}): \ \ \ \ \ \ \ %
{q}_{S,h}|_{T}\in \mathbb{P}_{1}(T)\ \ \ \hspace{2.5mm} \forall ~T\in \mathcal{T}_{S,h} \},\\
{X}_{D,h}&:=&\{\psi_{D,h}\in H^{1}(\Omega _{D}): \ \ \ \ \hspace{0.5mm} %
\psi_{D,h}|_{T}\in \mathbb{P}_{2}(T)\ \ \hspace{1.5mm} \ \forall ~T\in \mathcal{T}_{D,h}, \ \ %
\psi_{D,h}|_{\Gamma_{D}}={0} \},
\end{eqnarray*}
The spaces $\mathbf{X}_{S,h} $ and ${Q}_{S,h}$ satisfy the
discrete LBB or inf-sup condition \cite{proveLBB,proveLBB2}.

Specifically, the discrete trace space on the interface $\Gamma$ is defined as follows:
\begin{eqnarray*}
{Z}_{h} ~:= ~\{g_{h}\in L^{2}(\Gamma ):\ \ %
g_{h}|_{\tau }\in \mathbb{P}_{2}(\tau )\ \ \forall ~\tau \in \mathcal{B}_{h}, \ \
g_{h}|_{\partial \Gamma}= {0} \}
= ~\mathbf{X}_{S,h}|_{\Gamma} \cdot \overrightarrow{n_{S}}
= ~{X}_{D,h}|_{\Gamma}.
\end{eqnarray*}

Based on the above finite element discretization and the weak formulation (\ref{DDMweak})-(\ref{DDMweak-3}), Chen. et al. \cite{Chen} presented a parallel finite element Robin-Robin DDM for solving the coupled Stokes-Darcy problem.

\noindent{\textbf{FE DDM Algorithm (\emph{DDM-Chen}).}}

1. Initial values of $g_{S,h}^0 \in Z_h$ and $g_{D,h}^0 \in Z_h$ are guessed, which can be chosen as zeros.

2. For $n=0,1,2,\cdots, N$, the Stokes and Darcy systems can be solved independently with Robin-type boundary conditions (\ref{Robintype1})-(\ref{Robintype2}): find
 $(\overrightarrow{u_{S,h}^n}, p_{S,h}^n) \in (\mathbf{X}_{S,h}, Q_{S,h})$ and $\phi_{D,h}^n\in{X}_{D,h}$ satisfies
\begin{eqnarray}\label{decoupled}
a_{S}(\overrightarrow{u_{S,h}^n},\overrightarrow{v_{S}})-b_{S}(\overrightarrow{v_{S}},{p_{S,h}^n})
&+&\delta_S\langle\overrightarrow{u_{S,h}^n}\cdot \overrightarrow{n_S},
\overrightarrow{v_{S}}\cdot \overrightarrow{n_S}\rangle
+\sum_{i=1}^{d-1}\frac{\nu\alpha\sqrt{d}}{\sqrt{\mathrm{trace}(\Pi)}}
\langle\overrightarrow{u_{S,h}^n}\cdot \overrightarrow{\tau _{i}},
\overrightarrow{v_{S}}\cdot \overrightarrow{\tau _{i}}\rangle\nonumber\\
\hspace{15mm} &=& \langle{g}_{S,h}^n,\overrightarrow{v_{S}}\cdot \overrightarrow{n_S}\rangle
+(\overrightarrow{f_{S}},\overrightarrow{v_{S}})
\hspace{26.5mm}  \forall ~\overrightarrow{v_{S}}\in \mathbf{X}_{S,h}, \\
\label{decoupled-2}\ \ \ \ \  \ \ \ \ \ \ \ \ \ \ \ \ \
b_{S}(\overrightarrow{u_{S,h}^n},q)&=&0 \hspace{62mm} \forall ~q\in Q_{S,h},
\end{eqnarray}
and
\begin{eqnarray}
\label{decoupled-3}\ \
\delta_D a_{D}(\phi_{D,h}^n,\psi) + \langle g \phi_{D,h}^n,\psi\rangle
= \langle {g}_{D,h}^n, \psi \rangle+ \delta_D (f_{D},\psi)
\hspace{13.5mm} \forall ~\psi \in {X}_{D,h},
\end{eqnarray}
respectively.

3. Update $g_{S,h}^{n+1},\ g_{D,h}^{n+1} \in Z_h$ for further iteration in the following manner:
\begin{eqnarray*}
&&  {g}^{n+1}_{S,h}=({\delta_S}/{\delta_D}){g}^{n}_{D,h}
-(1+{\delta_S}/{\delta_D})g \phi_{D,h}^n+gz,\\
&&  {g}^{n+1}_{D,h}=-{g}^{n}_{S,h}+(\delta_S+\delta_D)\overrightarrow{u_{S,h}^{n}} \cdot \overrightarrow{n_S}+gz.
\end{eqnarray*}

In step 2, the way of choosing $N$ will be described in the numerical section. The convergence analysis of \emph{DDM-Chen}  derived  in \cite{Chen}. For the case $\delta_S=\delta_D=\delta$, \emph{DDM-Chen} proved the convergence rate of $1-Ch$. On the other hand, authors achieved $h$-independent error estimate for $\delta_S<\delta_D$ under the suitable choice of the parameters $\delta_S$ and $\delta_D$ including some control conditions. Furthermore, the Robin-Robin domain decomposition method leads to a finite element approximation for the decoupled Stokes-Darcy problem with Robin-type boundary conditions (\ref{Robintype1})-(\ref{Robintype2}):  for
two given functions $g_{S,h},\ g_{D,h}\in Z_h$ and two positive constants $\delta_S, \delta_D$, searching $(\overrightarrow{u_{S,h}},p_{S,h})\in  (\mathbf{X}_{S,h}, Q_{S,h})$ and $\phi_{D,h} \in {X}_{D,h}$ such that
\begin{eqnarray}\label{FEDDMweak}
a_{S}(\overrightarrow{u_{S,h}},\overrightarrow{v_{S}})-b_{S}(\overrightarrow{v_{S}},{p_{S,h}})
+&\delta_S&\langle\overrightarrow{u_{S,h}}\cdot \overrightarrow{n_S},
\overrightarrow{v_{S}}\cdot \overrightarrow{n_S}\rangle
+\sum_{i=1}^{d-1}\frac{\nu\alpha\sqrt{d}}{\sqrt{\mathrm{trace}(\Pi)}}
\langle\overrightarrow{u_{S,h}}\cdot \overrightarrow{\tau _{i}},
\overrightarrow{v_{S}}\cdot \overrightarrow{\tau _{i}}\rangle\nonumber\\
\hspace{15mm} &=& \langle{g}_{S,h},\overrightarrow{v_{S}}\cdot \overrightarrow{n_S}\rangle
+(\overrightarrow{f_{S}},\overrightarrow{v_{S}})
\hspace{20.5mm}  \forall ~\overrightarrow{v_{S}}\in \mathbf{X}_{S,h}, \\
\label{FEDDMweak-2}\ \ \ \ \  \ \ \ \ \ \ \ \ \ \ \ \ \
b_{S}(\overrightarrow{u_{S,h}},q)&=&0 \hspace{55.7mm} \forall ~q\in Q_{S,h},\\
\label{FEDDMweak-3}\ \
\delta_D a_{D}(\phi_{D,h},\psi) + \langle g \phi_{D,h},\psi\rangle
&=& \langle{g}_{D,h}, \psi\rangle+ \delta_D (f_{D},\psi)
\hspace{24.7mm} \forall ~\psi \in {X}_{D,h},
\end{eqnarray}
with the FE compatibility conditions:
\begin{eqnarray}
g_{S,h} &=& \delta_S \hspace{0.5mm} \overrightarrow{u_{S,h}} \cdot \overrightarrow{n_S}-g \hspace{0.5mm} \phi_{D,h}+gz \hspace{5.5mm} \mathrm{on}~ \Gamma, \label{FEcomp1}\\
g_{D,h} &=& \delta_D \hspace{0.5mm} \overrightarrow{u_{S,h}} \cdot \overrightarrow{n_S}+g \hspace{0.5mm} \phi_{D,h} \hspace{13mm} \mathrm{on}~ \Gamma.
\label{FEcomp2}
\end{eqnarray}

\section{The Two-grid Domain Decomposition Algorithms}
Although the Robin-Robin domain decomposition methods are well-studied and Chen et al. \cite{Chen} achieved convergence rate generally $(1-Ch)$ for $\delta_S=\delta_D=\delta$, but it causes enormous computational cost due to increases of iterative steps for the finer mesh scales $ h $. Even $h$-independent convergence for the case of $\delta_S<\delta_D$ requires massive computational effort for the fine grid solutions with decreasing mesh size  $h$. The computational accuracy, efficiency, and effectiveness can be improved by combining two benchmark methods together: one is domain decomposition methods, and the other is classical two-grid methods. To this end, two-types of novel optimized DDMs based on the two-grid techniques are designed and analyzed from the aspect of saving the CPU time and computational resources, referred to as ``Two-grid Domain Decomposition methods (TGDDMs)" hereafter.

\subsection{TGDDM Algorithm 1 (\emph{TGDDM1})}
The first two-grid domain decomposition algorithm for solving the coupled Stokes-Darcy problem require the following two steps:
\begin{enumerate}
\item Recall \emph{DDM-Chen} to solve the problem (\ref{FEDDMweak})-(\ref{FEDDMweak-3}) on a coarse grid with mesh size $H$. We can finally obtain coarse grid data  $g_{S,H}, g_{D,H} \in Z_H$.
\item
A modified fine grid problem is constructed and solved by
finding $(\overrightarrow{u_{S}^h}, p_{S}^h) \in (\mathbf{X}_{S,h}, Q_{S,h})$, $\phi_{D}^h\in {X}_{D,h}$, such that
\begin{eqnarray}\label{FineDDMweak}
a_{S}(\overrightarrow{u_{S}^h},\overrightarrow{v_{S}})-b_{S}(\overrightarrow{v_{S}},{p_{S}^h})
+&\delta_S&\langle\overrightarrow{u_{S}^h}\cdot \overrightarrow{n_S},
\overrightarrow{v_{S}}\cdot \overrightarrow{n_S}\rangle
+\sum_{i=1}^{d-1}\frac{\nu\alpha\sqrt{d}}{\sqrt{\mathrm{trace}(\Pi)}}
\langle\overrightarrow{u_{S}^h}\cdot \overrightarrow{\tau _{i}},
\overrightarrow{v_{S}}\cdot \overrightarrow{\tau _{i}}\rangle\nonumber\\
&=& \langle{g}_{S,H},\overrightarrow{v_{S}}\cdot \overrightarrow{n_S}\rangle
+(\overrightarrow{f_{S}},\overrightarrow{v_{S}})
\hspace{14.5mm}  \forall ~\overrightarrow{v_{S}}\in \mathbf{X}_{S,h}, \ \ \ \ \ \\
\label{FineDDMweak-2}\ \ \ \ \  \ \ \ \ \ \ \ \ \ \ \ \ \
b_{S}(\overrightarrow{u_{S}^h},q)&=&0 \hspace{50.5mm} \forall ~q\in Q_{S,h},\ \ \ \ \ \\
\label{FineDDMweak-3}\ \
\delta_D a_{D}(\phi_{D}^h,\psi) + \langle g \phi_{D}^h,\psi\rangle
&=& \langle{g}_{D,H}, \psi\rangle+ \delta_D (f_{D},\psi)
\hspace{18.7mm} \forall ~\psi \in {X}_{D,h}.
\end{eqnarray}
\end{enumerate}

 The well-posedness of the finite element approximation for the modified fine grid problem (\ref{FineDDMweak})-(\ref{FineDDMweak-3}) can be easily checked. More importantly, the newly constructed Algorithm \emph{TGDDM1} can admit us to decouple the Stokes-Darcy system into two types of subproblems, namely the Stokes one and the Darcy one, consequently, we can utilize some ``legacy" codes to simulate both subproblems in parallel by further applying  mutually studied non-overlapping or particularly overlapping DDMs for each subproblem.

\subsection{Error Analysis of  \emph{TGDDM1}}
In this subsection, we derive the error estimate of the algorithm \emph{TGDDM1}. Before that we  introduce some preliminaries and basic results for convenience.  We  use the notation $x \lesssim y$ as $x \leq Cy$, where the generic constant $C$ may denote different values in different places.
Assume that the solutions $\overrightarrow{u_S}, p_S$ to (\ref{Stokes1})-(\ref{Stokes2}) and $\phi_D$ to (\ref{Darcy}) admit the regularities of $\overrightarrow{u_S} \in H^3(\Omega_S)^d, p_S \in H^2(\Omega_S)$ and $\phi_D \in H^3(\Omega_D)$. Recall that the error estimates for the decoupled scheme in \cite{Chen}
for the Stokes-Darcy problem with Robin-type boundary conditions  hold:
\begin{eqnarray*}
&&||\overrightarrow{u_S}-\overrightarrow{u_{S,h}}||_1 \lesssim \hspace{0.5mm}h^2, \hspace{9.8mm} ||\overrightarrow{u_S}-\overrightarrow{u_{S,h}}|| \lesssim \hspace{0.5mm}h^3, \hspace{9.5mm}
||p_S-p_{S,h}|| \lesssim \hspace{0.5mm}h^2,\\
&&||\phi_D-\phi_{D,h}||_1 \lesssim \hspace{0.5mm}h^2, \hspace{8.5mm}
||\phi_D-\phi_{D,h}|| \lesssim \hspace{0.5mm}h^3.
\end{eqnarray*}

For the finite element approximation (\ref{FEDDMweak})-(\ref{FEDDMweak-3}), some error functions, related with the differences of the solution components on the coarse grid and fine grid, are denoted by
\begin{eqnarray*}
&&\overrightarrow{e_{S,H}}=\overrightarrow{u_{S,h}}-\overrightarrow{u_{S,H}},\hspace{5.7mm}
{\varepsilon}_{S,H}=p_{S,h}-{p}_{S,H}, \hspace{5mm}
{\eta}_{S,H}=g_{S,h}-{g}_{S,H},\\
&&{e}_{D,H}={\phi}_{D,h}-{\phi}_{D,H},\hspace{3.4mm}
{\eta}_{D,H}=g_{D,h}-{g}_{D,H}.
\end{eqnarray*}
Then, several basic but important error estimates regarding with the numerical solutions to (\ref{FEDDMweak})-(\ref{FEDDMweak-3}) on the coarse and fine grids can be easily derived by using  the triangle inequality, for instance:
\begin{eqnarray*}
&&||\overrightarrow{e_{S,H}}||_1 \lesssim \hspace{0.5mm}H^2, \hspace{9.8mm} ||\overrightarrow{e_{S,H}}|| \lesssim \hspace{0.5mm}H^3, \hspace{9.5mm} ||{\varepsilon}_{S,H}|| \lesssim \hspace{0.5mm}H^2,\\
&&||{e_{D,H}}||_1 \lesssim \hspace{0.5mm}H^2, \hspace{9mm} ||{e_{D,H}}|| \lesssim \hspace{0.5mm}H^3.
\end{eqnarray*}

In Lemma \ref{CFerr-comp}, we derive the estimates for
 the FE compatibility conditions (\ref{FEcomp1})-(\ref{FEcomp2}) for the coarse and fine grids by utilizing the above error estimates above for (\ref{FEDDMweak})-(\ref{FEDDMweak-3}).  Note that these estimates will play the vital role in the numerical analysis hereafter.

\begin{lemma}\label{CFerr-comp}
Along the interface $\Gamma$, two error estimates on $\eta_{S,H}$ and $\eta_{D,H}$ related with the interface conditions (\ref{FEcomp1})-(\ref{FEcomp2}) have the forms of
\begin{eqnarray*}
&&||{\eta_{S,H}}||_{\Gamma} \lesssim (\delta_S+g)\hspace{0.5mm}H^{\frac{5}{2}},
\hspace{10mm} ||{\eta_{D,H}}||_{\Gamma} \lesssim (\delta_D+g)\hspace{0.5mm}H^{\frac{5}{2}}.
\end{eqnarray*}
\end{lemma}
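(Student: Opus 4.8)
The plan is to reduce the statement entirely to the interior error bounds for $\overrightarrow{e_{S,H}}$ and $e_{D,H}$ already recorded above, using only a trace inequality and the triangle inequality. The first step is to subtract the FE compatibility conditions (\ref{FEcomp1})--(\ref{FEcomp2}) written on the fine grid from the same conditions written on the coarse grid. Since the inhomogeneous $gz$ terms are identical on both grids and cancel, this yields the pointwise identities on $\Gamma$
\begin{eqnarray*}
\eta_{S,H} &=& \delta_S\, \overrightarrow{e_{S,H}}\cdot\overrightarrow{n_S} - g\, e_{D,H},\\
\eta_{D,H} &=& \delta_D\, \overrightarrow{e_{S,H}}\cdot\overrightarrow{n_S} + g\, e_{D,H}.
\end{eqnarray*}
Taking $L^2(\Gamma)$ norms, using the triangle inequality and $|\overrightarrow{n_S}|=1$, this gives $||\eta_{S,H}||_{\Gamma} \le \delta_S\,||\overrightarrow{e_{S,H}}||_{\Gamma} + g\,||e_{D,H}||_{\Gamma}$ and the same bound for $||\eta_{D,H}||_{\Gamma}$ with $\delta_D$ replacing $\delta_S$. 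So it remains to show $||\overrightarrow{e_{S,H}}||_{\Gamma} \lesssim H^{5/2}$ and $||e_{D,H}||_{\Gamma} \lesssim H^{5/2}$.

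For these two interface bounds I would invoke the multiplicative trace inequality $||v||_{\Gamma} \lesssim ||v||^{1/2}\,||v||_1^{1/2}$, valid for any $v\in H^1(\Omega_{S/D})$ with a constant depending only on the fixed geometry. Applied to $\overrightarrow{e_{S,H}}$ and $e_{D,H}$ together with the already-derived coarse/fine-grid bounds $||\overrightarrow{e_{S,H}}||\lesssim H^3$, $||\overrightarrow{e_{S,H}}||_1 \lesssim H^2$, $||e_{D,H}||\lesssim H^3$, $||e_{D,H}||_1\lesssim H^2$ (themselves a triangle-inequality consequence of the Chen et al. estimates recalled above), this yields $||\overrightarrow{e_{S,H}}||_{\Gamma} \lesssim (H^3)^{1/2}(H^2)^{1/2} = H^{5/2}$ and likewise $||e_{D,H}||_{\Gamma}\lesssim H^{5/2}$. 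Substituting back into the bounds from the first step gives $||\eta_{S,H}||_{\Gamma}\lesssim(\delta_S+g)H^{5/2}$ and $||\eta_{D,H}||_{\Gamma}\lesssim(\delta_D+g)H^{5/2}$, which is the claim.

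There is no serious obstacle here; once the compatibility conditions are differenced the argument is pure bookkeeping. The only points needing a little care are: (i) checking that the constant in the multiplicative trace inequality is $H$-independent — which is immediate since $\Gamma$ and $\Omega_{S/D}$ are fixed, and this is the continuous (not discrete) trace inequality, so no inverse powers of $H$ appear; (ii) tracking the dependence on $\delta_S,\delta_D,g$ through the chain of inequalities so that the prefactors $(\delta_S+g)$ and $(\delta_D+g)$ emerge cleanly, i.e.\ not absorbing $g$ or $\delta_{S/D}$ into the generic constant; and (iii) noting that the half-power loss from $H^3$ and $H^2$ down to $H^{5/2}$ is exactly the trace-interpolation exponent $\tfrac12\cdot 3 + \tfrac12\cdot 2$, which is why the rate along $\Gamma$ is $H^{5/2}$ rather than $H^3$.
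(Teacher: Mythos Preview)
Your proposal is correct and follows essentially the same route as the paper: difference the FE compatibility conditions to obtain $\eta_{S,H}=\delta_S\,\overrightarrow{e_{S,H}}\cdot\overrightarrow{n_S}-g\,e_{D,H}$ and $\eta_{D,H}=\delta_D\,\overrightarrow{e_{S,H}}\cdot\overrightarrow{n_S}+g\,e_{D,H}$, then apply the triangle inequality and the multiplicative trace inequality $||v||_\Gamma\le C_{tr}||v||^{1/2}||v||_1^{1/2}$ together with the recorded bounds $||\overrightarrow{e_{S,H}}||\lesssim H^3$, $||\overrightarrow{e_{S,H}}||_1\lesssim H^2$, $||e_{D,H}||\lesssim H^3$, $||e_{D,H}||_1\lesssim H^2$. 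The paper's proof is exactly this, stated slightly more tersely.
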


\begin{proof}
According to the FE compatibility conditions (\ref{FEcomp1})-(\ref{FEcomp2}), we can directly derive the following relations:
\begin{eqnarray*}
\eta_{S,H} = \delta_S \hspace{0.5mm} \overrightarrow{e_{S,H}} \cdot \overrightarrow{n_S}-g\hspace{0.5mm} e_{D,H}, \hspace{5mm}
\eta_{D,H} = \delta_D \hspace{0.5mm} \overrightarrow{e_{S,H}} \cdot \overrightarrow{n_S}+g\hspace{0.5mm} e_{D,H}.
\end{eqnarray*}
By the virtue of Young's inequality, trace inequality (see (2.10) in \cite{trace}) with a positive constant $C_{tr}$ and the basic error estimates above, we can deduce that
\begin{eqnarray*}
||\eta_{S,H}||_{\Gamma} &=& ||\delta_S \hspace{0.5mm} \overrightarrow{e_{S,H}} \cdot \overrightarrow{n_S}-g\hspace{0.5mm} e_{D,H}||_{\Gamma}\\
&\leq& \delta_S ||\overrightarrow{e_{S,H}} \cdot \overrightarrow{n_S}||_{\Gamma}+ g||e_{D,H}||_{\Gamma}\\
&\leq& \delta_S C_{tr} ||\overrightarrow{e_{S,H}}||^{\frac{1}{2}} ||\overrightarrow{e_{S,H}}||_1^{\frac{1}{2}}
+ g C_{tr} ||e_{D,H}||^{\frac{1}{2}} ||e_{D,H}||_1^{\frac{1}{2}}\\
&\lesssim&  (\delta_S+g) \hspace{0.5mm} H^{\frac{5}{2}}.
\end{eqnarray*}
Similarly, the error estimate of $||\eta_{D,H}||_{\Gamma}$ can be demonstrated.
\end{proof}

Consequently, we have the error analysis for Algorithm \emph{TGDDM1} in the following theorem.
\begin{theorem}\label{Error}
Let $(\overrightarrow{u_{S,h}}, p_{S,h}; \phi_{D,h})$ be the solution of (\ref{FEDDMweak})-(\ref{FEDDMweak-3}), and assume that $(\overrightarrow{u_{S}^h}, p_{S}^h; \phi_{D}^h)$ is the solution of (\ref{FineDDMweak})-(\ref{FineDDMweak-3}) derived from \emph{TGDDM1}, the following error estimates hold:
\begin{eqnarray}
&&||\phi_{D,h}-\phi_{D}^h||_1 \lesssim \frac{\delta_D+g}{k\delta_D} \hspace{0.5mm} H^{\frac{5}{2}}, \label{errDarcy}\\
&&||\overrightarrow{u_{S,h}}-\overrightarrow{u_S^h}||_1 \lesssim (\delta_S+g)\hspace{0.5mm} H^{\frac{5}{2}}, \label{errStokes1}\\
&&||p_{S,h}-p_S^h|| \lesssim \Big(2\nu+\delta_S C_{tr}^2+\frac{\nu\alpha\sqrt{d}C_{tr}^2}{\sqrt{\mathrm{trace}(\Pi)}}+C_{tr} \Big) (\delta_S+g) \hspace{0.5mm} H^{\frac{5}{2}}. \label{errStokes2}
\end{eqnarray}
\end{theorem}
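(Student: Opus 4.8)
The plan is to subtract the fine-grid Robin--Robin system (\ref{FEDDMweak})--(\ref{FEDDMweak-3}) from the modified fine-grid system (\ref{FineDDMweak})--(\ref{FineDDMweak-3}), so that the only discrepancy lives on the interface data: $g_{S,h}$ is replaced by $g_{S,H}$ and $g_{D,h}$ by $g_{D,H}$. Writing $\overrightarrow{E_S}=\overrightarrow{u_{S,h}}-\overrightarrow{u_S^h}$, $\mathcal{E}_S=p_{S,h}-p_S^h$, $E_D=\phi_{D,h}-\phi_D^h$, the difference of the two Darcy equations reads $\delta_D a_D(E_D,\psi)+\langle gE_D,\psi\rangle=\langle \eta_{D,H},\psi\rangle$ for all $\psi\in X_{D,h}$, while the Stokes pair satisfies $a_S(\overrightarrow{E_S},\overrightarrow{v_S})-b_S(\overrightarrow{v_S},\mathcal{E}_S)+\delta_S\langle \overrightarrow{E_S}\cdot\overrightarrow{n_S},\overrightarrow{v_S}\cdot\overrightarrow{n_S}\rangle+\sum_i \frac{\nu\alpha\sqrt d}{\sqrt{\mathrm{trace}(\Pi)}}\langle\overrightarrow{E_S}\cdot\overrightarrow{\tau_i},\overrightarrow{v_S}\cdot\overrightarrow{\tau_i}\rangle=\langle\eta_{S,H},\overrightarrow{v_S}\cdot\overrightarrow{n_S}\rangle$ together with $b_S(\overrightarrow{E_S},q)=0$. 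Note the two subproblems are genuinely decoupled here, which is what makes the estimate clean.

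For (\ref{errDarcy}) I would take $\psi=E_D$ in the Darcy difference equation. Coercivity of $a_D$ gives $k\delta_D\|\nabla E_D\|^2+g\|E_D\|_\Gamma^2\le \langle\eta_{D,H},E_D\rangle\le \|\eta_{D,H}\|_\Gamma\|E_D\|_\Gamma$; dropping the (nonnegative) boundary term, applying the Poincaré inequality to bound $\|E_D\|_1\lesssim\|\nabla E_D\|$, and using the trace inequality to control $\|E_D\|_\Gamma\lesssim \|E_D\|_1$ yields $\|E_D\|_1\lesssim \frac{1}{k\delta_D}\|\eta_{D,H}\|_\Gamma$. Inserting the bound $\|\eta_{D,H}\|_\Gamma\lesssim (\delta_D+g)H^{5/2}$ from Lemma \ref{CFerr-comp} gives the claimed estimate. (One can alternatively keep the $g\|E_D\|_\Gamma^2$ term and use Young's inequality to absorb the right-hand side; the coefficient structure is the same.)

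For (\ref{errStokes1}) I would take $\overrightarrow{v_S}=\overrightarrow{E_S}$ in the Stokes difference equation; since $b_S(\overrightarrow{E_S},\mathcal{E}_S)=0$ by the discrete incompressibility of $\overrightarrow{E_S}$, the pressure drops out and coercivity (Korn's inequality for $a_S$, positivity of the two interface terms) gives $\|\overrightarrow{E_S}\|_1^2\lesssim \langle\eta_{S,H},\overrightarrow{E_S}\cdot\overrightarrow{n_S}\rangle\lesssim \|\eta_{S,H}\|_\Gamma\|\overrightarrow{E_S}\|_\Gamma\lesssim\|\eta_{S,H}\|_\Gamma\|\overrightarrow{E_S}\|_1$, hence $\|\overrightarrow{E_S}\|_1\lesssim\|\eta_{S,H}\|_\Gamma\lesssim(\delta_S+g)H^{5/2}$. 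For the pressure estimate (\ref{errStokes2}) I would invoke the discrete inf-sup (LBB) condition: there is $\overrightarrow{v_S}\in\mathbf{X}_{S,h}$ with $b_S(\overrightarrow{v_S},\mathcal{E}_S)\gtrsim\|\mathcal{E}_S\|\,\|\overrightarrow{v_S}\|_1$ and $\|\overrightarrow{v_S}\|_1\lesssim\|\mathcal{E}_S\|$; substituting this test function into the Stokes difference equation isolates $b_S(\overrightarrow{v_S},\mathcal{E}_S)$ on one side and bounds it by $a_S(\overrightarrow{E_S},\overrightarrow{v_S})+\delta_S\langle\overrightarrow{E_S}\cdot\overrightarrow{n_S},\overrightarrow{v_S}\cdot\overrightarrow{n_S}\rangle+\sum_i\frac{\nu\alpha\sqrt d}{\sqrt{\mathrm{trace}(\Pi)}}\langle\cdots\rangle+\langle\eta_{S,H},\overrightarrow{v_S}\cdot\overrightarrow{n_S}\rangle$. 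Bounding each term with Cauchy--Schwarz, the trace inequality (constant $C_{tr}$) on the boundary terms, and the bounds already obtained for $\|\overrightarrow{E_S}\|_1$ and $\|\eta_{S,H}\|_\Gamma$, then cancelling one power of $\|\mathcal{E}_S\|$, produces exactly the stated constant $\big(2\nu+\delta_S C_{tr}^2+\frac{\nu\alpha\sqrt d C_{tr}^2}{\sqrt{\mathrm{trace}(\Pi)}}+C_{tr}\big)(\delta_S+g)H^{5/2}$.

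The routine parts are the coercivity/Cauchy--Schwarz manipulations; the only place requiring care is the pressure estimate, where one must keep precise track of the constants coming from the bilinear form $a_S$, the two interface bilinear forms, and the trace inequality in order to land on the displayed coefficient rather than a generic $C$. The genuinely structural observation — that the whole theorem reduces to the interface-data perturbations $\eta_{S,H}$, $\eta_{D,H}$ already estimated in Lemma \ref{CFerr-comp} — is what makes the argument short; no Galerkin-orthogonality or approximation-theory input is needed beyond that lemma and the standard stability properties of the discrete spaces.
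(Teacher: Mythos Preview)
Your proposal is correct and follows essentially the same approach as the paper: subtract the two fine-grid systems to obtain error equations driven solely by $\eta_{S,H}$ and $\eta_{D,H}$, test with the error functions themselves (using Korn/coercivity and dropping the nonnegative interface terms) to get the velocity and hydraulic-head bounds, and then recover the pressure via the discrete inf--sup condition, tracking the trace constants to obtain the displayed coefficient. The paper's proof differs only in presentation (it explicitly adds the $q$-equation rather than saying ``the pressure drops out by incompressibility''), not in substance.
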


\begin{proof}
On the fine grid, subtracting (\ref{FineDDMweak})-(\ref{FineDDMweak-3}) from (\ref{FEDDMweak})-(\ref{FEDDMweak-3}) yields
\begin{eqnarray}
\label{errFE-1}\ \
\delta_D a_{D}(\phi_{D,h}-\phi_D^h,\psi) + \langle g (\phi_{D,h}-\phi_D^h),\psi\rangle
&=& \langle{g}_{D,h}-g_{D,H}, \psi\rangle
\hspace{15mm} \forall ~\psi \in {X}_{D,h},\\
\label{errFE-2} \ \
a_{S}(\overrightarrow{u_{S,h}}-\overrightarrow{u_S^h},\overrightarrow{v_{S}})
-b_{S}(\overrightarrow{v_{S}},{p_{S,h}}-p_S^h)
&+&\delta_S\langle(\overrightarrow{u_{S,h}}-\overrightarrow{u_S^h})\cdot \overrightarrow{n_S},
\overrightarrow{v_{S}}\cdot \overrightarrow{n_S}\rangle\nonumber\\
\hspace{5mm} &+&\sum_{i=1}^{d-1}\frac{\nu\alpha\sqrt{d}}{\sqrt{\mathrm{trace}(\Pi)}}
\langle(\overrightarrow{u_{S,h}}-\overrightarrow{u_S^h})\cdot \overrightarrow{\tau _{i}},
\overrightarrow{v_{S}}\cdot \overrightarrow{\tau _{i}}\rangle\nonumber\\
\hspace{15mm} &=& \langle{g}_{S,h}-g_{S,H},\overrightarrow{v_{S}}\cdot \overrightarrow{n_S}\rangle
\hspace{8mm} \forall ~\overrightarrow{v_S}\in \mathbf{X}_{S,h},\\
\label{errFE-3}\ \
b_{S}(\overrightarrow{u_{S,h}}-\overrightarrow{u_S^h},q)&=&0 \hspace{38.5mm} \forall ~q\in Q_{S,h}.
\end{eqnarray}
For the Darcy system, choosing $\psi=\phi_{D,h}-\phi_{D}^h \in {X}_{D,h}$ in (\ref{errFE-1}) gives
\begin{eqnarray}
\delta_D a_{D}(\phi_{D,h}-\phi_D^h,\phi_{D,h}-\phi_D^h) + g ||\phi_{D,h}-\phi_D^h||_{\Gamma}^2
= \langle{g}_{D,h}-g_{D,H}, \phi_{D,h}-\phi_D^h\rangle. \label{errD}
\end{eqnarray}
Thanks to Cauchy-Schwarz inequality, trace inequality, and Lemma \ref{CFerr-comp}, the error result for the porous media region can be deduced from (\ref{errD}) as
\begin{eqnarray*}
||\phi_{D,h}-\phi_D^h||_1^2 &\leq& \frac{1}{k}a_{D}(\phi_{D,h}-\phi_D^h,\phi_{D,h}-\phi_D^h)\\
&\leq& \frac{1}{k\delta_D}\Big[ \delta_D a_{D}(\phi_{D,h}-\phi_D^h,\phi_{D,h}-\phi_D^h) + g ||\phi_{D,h}-\phi_D^h||_{\Gamma}^2 \Big]\\
&\leq& \frac{1}{k\delta_D} \langle{g}_{D,h}-g_{D,H}, \phi_{D,h}-\phi_D^h\rangle\\
&\leq& \frac{1}{k\delta_D}||\eta_{D,H}||_{\Gamma}||\phi_{D,h}-\phi_D^h||_{\Gamma}
 \lesssim \frac{\delta_D+g}{k\delta_D} \hspace{0.5mm} H^{\frac{5}{2}} ||\phi_{D,h}-\phi_D^h||_1,
\end{eqnarray*}
which immediately yields the error inequality (\ref{errDarcy}).

For the Stokes problem, taking $\overrightarrow{v}=\overrightarrow{u_{S,h}}-\overrightarrow{u_S^h} \in \mathbf{X}_{S,h}$ in (\ref{errFE-2}) and $q=p_{S,h}-p_{S}^h \in Q_{S,h}$ in (\ref{errFE-3}), then adding the resulting equations together, we can obtain
\begin{eqnarray*}
a_{S}(\overrightarrow{u_{S,h}}-\overrightarrow{u_S^h},\overrightarrow{u_{S,h}}-\overrightarrow{u_S^h})
&+&\delta_S ||(\overrightarrow{u_{S,h}}-\overrightarrow{u_S^h})\cdot \overrightarrow{n_S}||_{\Gamma}
+\sum_{i=1}^{d-1}\frac{\nu\alpha\sqrt{d}}{\sqrt{\mathrm{trace}(\Pi)}}
||(\overrightarrow{u_{S,h}}-\overrightarrow{u_S^h})\cdot \overrightarrow{\tau _{i}}||_{\Gamma}\\
\hspace{5mm}&=& \langle{g}_{S,h}-g_{S,H},(\overrightarrow{u_{S,h}}-\overrightarrow{u_S^h})\cdot \overrightarrow{n_S}\rangle.
\end{eqnarray*}
Due to Korn's inequality, namely, there exist a positive constant $C_{K}$, such that
\begin{eqnarray*}
||\overrightarrow{v_S}||_1^2 \leq \frac{C_K^2}{2\nu} a_{S}(\overrightarrow{v_{S}},\overrightarrow{v_{S}})\quad \overrightarrow{v_S} \in \mathbf{X}_S,
\end{eqnarray*}
we can then get
\begin{eqnarray*}
||\overrightarrow{u_{S,h}}-\overrightarrow{u_S^h}||_1^2&\leq& \frac{C_K^2}{2\nu} \Big[a_{S}(\overrightarrow{u_{S,h}}-\overrightarrow{u_S^h},\overrightarrow{u_{S,h}}-\overrightarrow{u_S^h})
+\delta_S ||(\overrightarrow{u_{S,h}}-\overrightarrow{u_S^h})\cdot \overrightarrow{n_S}||_{\Gamma}\\
&& \hspace{20mm}+\sum_{i=1}^{d-1}\frac{\nu\alpha\sqrt{d}}{\sqrt{\mathrm{trace}(\Pi)}}
||(\overrightarrow{u_{S,h}}-\overrightarrow{u_S^h})\cdot \overrightarrow{\tau _{i}}||_{\Gamma}\Big]\\
&\leq& \frac{C_K^2}{2\nu} \langle{g}_{S,h}-g_{S,H},(\overrightarrow{u_{S,h}}-\overrightarrow{u_S^h})\cdot \overrightarrow{n_S}\rangle\\
&\leq& \frac{C_K^2}{2\nu}||\eta_{S,H}||_{\Gamma}||\overrightarrow{u_{S,h}}-\overrightarrow{u_S^h}||_{\Gamma}
\lesssim (\delta_S+g) \hspace{0.5mm} H^{\frac{5}{2}} ||\overrightarrow{u_{S,h}}-\overrightarrow{u_S^h}||_1.
\end{eqnarray*}
The proof for the error estimate (\ref{errStokes1}) is completed.

To check the last estimate (\ref{errStokes2}), we need to use the discrete Brezzi-Babuska condition on the free fluid domain $\Omega_S$, namely, for $q=p_{S,h}-p_{S}^h \in Q_{S,h}$, there exists $\overrightarrow{v_S} \in \mathbf{X}_{S,h}$, such that
\begin{eqnarray*}
||p_{S,h}-p_{S}^h|| &\leq& \frac{-b_{S}(\overrightarrow{v_{S}},{p_{S,h}}-p_S^h)}{||\overrightarrow{v_S}||_1}.
\end{eqnarray*}
Then applying the equation (\ref{errFE-2}) by such $\overrightarrow{v_S}$,  we can arrive at
\begin{eqnarray*}
||p_{S,h}-p_{S}^h|| &\leq& \frac{1}{||\overrightarrow{v_S}||_1} \Big[|a_{S}(\overrightarrow{u_{S,h}}-\overrightarrow{u_S^h},\overrightarrow{v_{S}})|
+\delta_S|<(\overrightarrow{u_{S,h}}-\overrightarrow{u_S^h})\cdot \overrightarrow{n_S},
\overrightarrow{v_{S}}\cdot \overrightarrow{n_S}>|\\
&&\hspace{2mm}+\sum_{i=1}^{d-1}\frac{\nu\alpha\sqrt{d}}{\sqrt{\mathrm{trace}(\Pi)}}
|\langle(\overrightarrow{u_{S,h}}-\overrightarrow{u_S^h})\cdot \overrightarrow{\tau _{i}},
\overrightarrow{v_{S}}\cdot \overrightarrow{\tau _{i}}\rangle|
+|\langle{g}_{S,h}-g_{S,H},\overrightarrow{v_{S}}\cdot \overrightarrow{n_S}\rangle|\Big]\\
&\leq& \Big(2\nu+\delta_S C_{tr}^2+\frac{\nu\alpha\sqrt{d}C_{tr}^2}{\sqrt{\mathrm{trace}(\Pi)}} \Big) ||\overrightarrow{u_{S,h}}-\overrightarrow{u_S^h}||_1 + C_{tr} ||{g}_{S,h}-g_{S,H}||_{\Gamma}\\
&\lesssim& \Big(2\nu+\delta_S C_{tr}^2+\frac{\nu\alpha\sqrt{d}C_{tr}^2}{\sqrt{\mathrm{trace}(\Pi)}}+C_{tr} \Big) (\delta_S+g) \hspace{0.5mm} H^{\frac{5}{2}},
\end{eqnarray*}
which completes the proof of (\ref{errStokes2}).
\end{proof}

Based on the theorem above, by further utilizing the triangle inequalities, we can immediately derive the error estimates of the solution of \emph{TGDDM1} compared with the exact one in the following.
\begin{corollary}\label{errorder}
Let $(\overrightarrow{u_{S}^h}, p_{S}^h; \phi_{D}^h) \in (\mathbf{X}_{S,h}, Q_{S,h}; {X}_{D,h})$
and $(\overrightarrow{u_{S}}, p_{S}; \phi_{D}) \in (\mathbf{X}_{S}, Q_{S}; {X}_{D})$ be the solutions
to \emph{TGDDM1} and  (\ref{DDMweak})-(\ref{DDMweak-3})  respectively,
for the coupled Stokes-Darcy model. For the choice of $H=h^{2/3}$, we have the error estimates
\begin{eqnarray*}
||\phi_{D}-\phi_{D}^h||_1 \lesssim \hspace{0.5mm} h^{\frac{5}{3}},\hspace{5mm}
||\overrightarrow{u_{S}}-\overrightarrow{u_S^h}||_1
+||p_{S}-p_S^h|| \lesssim \hspace{0.5mm} h^{\frac{5}{3}}.
\end{eqnarray*}
Moreover, if specially selecting $H=h^{4/5}$, the error estimates are improved as:
\begin{eqnarray}\label{errTGDDM1}
||\phi_{D}-\phi_{D}^h||_1 \lesssim \hspace{0.5mm} h^2,\hspace{5mm}
||\overrightarrow{u_{S}}-\overrightarrow{u_S^h}||_1
+||p_{S}-p_S^h|| \lesssim \hspace{0.5mm} h^2.
\end{eqnarray}
\end{corollary}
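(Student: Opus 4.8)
The plan is to derive Corollary \ref{errorder} purely as a consequence of Theorem \ref{Error} together with the $h$-convergence rates for \emph{DDM-Chen} that are recalled right before Lemma \ref{CFerr-comp}, using nothing more than the triangle inequality. The key observation is that the theorem controls the \emph{difference} between the fine-grid DDM solution $(\overrightarrow{u_{S,h}},p_{S,h};\phi_{D,h})$ and the \emph{TGDDM1} solution $(\overrightarrow{u_S^h},p_S^h;\phi_D^h)$ in terms of powers of $H$, while the recalled estimates control the difference between the fine-grid DDM solution and the exact solution $(\overrightarrow{u_S},p_S;\phi_D)$ in terms of powers of $h$. Splitting, e.g., $\|\phi_D-\phi_D^h\|_1 \le \|\phi_D-\phi_{D,h}\|_1 + \|\phi_{D,h}-\phi_D^h\|_1$ and similarly for the Stokes velocity and pressure, one immediately gets bounds of the form $h^2 + H^{5/2}$ (absorbing the parameter-dependent constants $\frac{\delta_D+g}{k\delta_D}$, $\delta_S+g$, etc. into the generic constant $C$, since $\delta_S,\delta_D,g,k,\nu,\alpha$ are fixed).

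First I would state the triangle-inequality splitting for each of the three quantities $\|\phi_D-\phi_D^h\|_1$, $\|\overrightarrow{u_S}-\overrightarrow{u_S^h}\|_1$, and $\|p_S-p_S^h\|$. Second, I would insert the recalled fine-grid error bounds $\|\overrightarrow{u_S}-\overrightarrow{u_{S,h}}\|_1 \lesssim h^2$, $\|p_S-p_{S,h}\| \lesssim h^2$, $\|\phi_D-\phi_{D,h}\|_1 \lesssim h^2$ for the first term of each split, and the bounds (\ref{errDarcy})--(\ref{errStokes2}) from Theorem \ref{Error} for the second term, yielding $\|\phi_D-\phi_D^h\|_1 \lesssim h^2 + H^{5/2}$ and $\|\overrightarrow{u_S}-\overrightarrow{u_S^h}\|_1 + \|p_S-p_S^h\| \lesssim h^2 + H^{5/2}$. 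Third, I would substitute the two prescribed scalings. For $H=h^{2/3}$ one has $H^{5/2} = h^{5/3}$, and since $h^{5/3}$ dominates $h^2$ as $h\to 0$, the combined bound is $\lesssim h^{5/3}$. For $H=h^{4/5}$ one has $H^{5/2} = h^2$, so both terms are $O(h^2)$ and the combined bound is $\lesssim h^2$, which is (\ref{errTGDDM1}).

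There is essentially no genuine obstacle here — the corollary is a bookkeeping consequence of the preceding theorem. The only mild subtlety worth a sentence is the bound $h^{5/3} \gtrsim h^2$ for small $h$: one should note that in the regime of interest $h<1$, so the lower power of $h$ is the dominant (larger) term, and hence the sum $h^2+h^{5/3}$ is $\lesssim h^{5/3}$; this is why the $H=h^{4/5}$ choice is singled out as optimal, being the threshold at which the two-grid error contribution $H^{5/2}$ exactly matches the intrinsic fine-grid accuracy $h^2$, so that no accuracy is sacrificed. A second minor point is that the $H^2$-type a priori bounds on $\overrightarrow{e_{S,H}}, \varepsilon_{S,H}, e_{D,H}$ and the compatibility estimates of Lemma \ref{CFerr-comp} are already baked into Theorem \ref{Error}, so they need not be reinvoked. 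I would therefore keep the proof to a few lines: state the splitting, quote the two sets of estimates, and evaluate at the two choices of $H$.
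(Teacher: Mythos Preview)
Your proposal is correct and matches the paper's own approach exactly: the paper states that the corollary follows ``by further utilizing the triangle inequalities'' from Theorem~\ref{Error} together with the recalled fine-grid estimates, which is precisely the splitting $h^2 + H^{5/2}$ you describe. There is nothing to add; your write-up would in fact be more detailed than what the paper provides.
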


\begin{rem}\label{remark}
Since the error estimates (\ref{errDarcy})-(\ref{errStokes2}) is not optimal for $H=h^{2/3}$ by directly applying the trace inequality, consequently the estimate (\ref{errTGDDM1}) is not optimal. We only utilize the coarse mesh data  $g_{S,H}, g_{D,H}$ and the trace inequality directly, which is shown in Lemma \ref{CFerr-comp}. Hence, some internal factors of interface terms affect our analysis. Based on some numerical experiments performed in the next section, it is recommended to have the optimal result in the form of (\ref{errTGDDM1}) for $H=h^{2/3}$. This optimal estimate can be resolved by some rigorous analysis in further study. In the next subsection, we have presented   improved version of error estimates with the minor modifications of some additional interface terms.
\end{rem}

\subsection{TGDDM Algorithm 2 (\emph{TGDDM2})}
Based on the discussion in Remark \ref{remark}, we improve Algorithm TGDDM1 by utilizing more solution information of \emph{DDM-Chen} on the coarse grid. The main strategy is to  modify both the Robin-type interface terms $\langle{g}_{S,H},\overrightarrow{v_{S}}\cdot \overrightarrow{n_S}\rangle, \langle{g}_{D,H}, \psi\rangle$ in the fine grid, and the interface terms $\delta_S\langle\overrightarrow{u_{S,H}}\cdot \overrightarrow{n_S},\overrightarrow{v_{S}}\cdot \overrightarrow{n_S}\rangle$ and $g\langle \phi_{D,H},\psi\rangle$, simultaneously. Note that this algorithm is utilizing DDM on the coarse grid and One-step modified DDM on the fine grid in essence. We refer the improved algorithm as  \emph{TGDDM2}, whose main idea is summarized as follows:
\begin{enumerate}
\item Solve a coarse grid problem (\ref{FEDDMweak})-(\ref{FEDDMweak-3}) with mesh size $H$ by recalling \emph{DDM-Chen}. Then, we can get the data: $\overrightarrow{u_{S,H}}\in \mathbf{X}_{S,H}$, $\phi_{D,H}\in {X}_{D,H}$ and $g_{S,H}, g_{D,H} \in Z_H$.
\item
The new fine grid problem is constructed by solving:
 $(\overrightarrow{u_{S}^h}, p_{S}^h) \in (\mathbf{X}_{S,h}, Q_{S,h})$, $\phi_{D}^h\in {X}_{D,h}$,  such that
for any $\overrightarrow{v_{S}}\in \mathbf{X}_{S,h}$, $q\in Q_{S,h}$ and $\psi \in {X}_{D,h}$
\begin{eqnarray}\label{2FineDDMweak}
a_{S}(\overrightarrow{u_{S}^h},\overrightarrow{v_{S}})&-&b_{S}(\overrightarrow{v_{S}},{p_{S}^h})
+\sum_{i=1}^{d-1}\frac{\nu\alpha\sqrt{d}}{\sqrt{\mathrm{trace}(\Pi)}}
\langle\overrightarrow{u_{S}^h}\cdot \overrightarrow{\tau _{i}},
\overrightarrow{v_{S}}\cdot \overrightarrow{\tau _{i}}\rangle\nonumber\\
&=&-\delta_S\langle\overrightarrow{u_{S,H}}\cdot \overrightarrow{n_S},
\overrightarrow{v_{S}}\cdot \overrightarrow{n_S}\rangle
+\langle{g}_{S,H},\overrightarrow{v_{S}}\cdot \overrightarrow{n_S}\rangle
+(\overrightarrow{f_{S}},\overrightarrow{v_{S}}),\hspace{8.5mm}\\
\label{2FineDDMweak-2}\ \ \ \ \  \ \ \ \ \ \ \ \ \ \ \ \ \
b_{S}(\overrightarrow{u_{S}^h},q)&=&0, \\
\label{2FineDDMweak-3}\ \
\delta_D a_{D}(\phi_{D}^h,\psi)
&=&- g\langle \phi_{D,H},\psi\rangle + \langle{g}_{D,H}, \psi\rangle+ \delta_D (f_{D},\psi).
\end{eqnarray}
\end{enumerate}

According to the first step in the Algorithm \emph{TGDDM2}  and the FE compatibility conditions (\ref{FEcomp1})-(\ref{FEcomp2}), we can directly show the following relations for further analysis:
\begin{eqnarray}
g_{S,H} &=& \delta_S \hspace{0.5mm} \overrightarrow{u_{S,H}} \cdot \overrightarrow{n_S}-g \hspace{0.5mm} \phi_{D,H}+gz,\label{gSH}\\
g_{D,H} &=& \delta_D \hspace{0.5mm} \overrightarrow{u_{S,H}} \cdot \overrightarrow{n_S}+g \hspace{0.5mm} \phi_{D,H}.\label{gDH}
\end{eqnarray}

Then, by subtracting (\ref{2FineDDMweak})-(\ref{2FineDDMweak-3}) from (\ref{FEDDMweak})-(\ref{FEDDMweak-3}) on the fine grid, and taking account of the conditions (\ref{gSH})-(\ref{gDH}), we can get the  the error functions
\begin{eqnarray}
\label{2errFE-1}\ \
a_{D}(\phi_{D,h}-\phi_D^h,\psi)&=&
\langle(\overrightarrow{u_{S,h}}-\overrightarrow{u_{S,H}})\cdot\overrightarrow{n_S}, \psi\rangle
\hspace{17.5mm} \forall ~\psi \in {X}_{D,h},\\
\label{2errFE-2} \ \
a_{S}(\overrightarrow{u_{S,h}}-\overrightarrow{u_S^h},\overrightarrow{v_{S}})
-b_{S}(\overrightarrow{v_{S}},{p_{S,h}}-&p_S^h&)
+\sum_{i=1}^{d-1}\frac{\nu\alpha\sqrt{d}}{\sqrt{\mathrm{trace}(\Pi)}}
\langle(\overrightarrow{u_{S,h}}-\overrightarrow{u_S^h})\cdot \overrightarrow{\tau _{i}},
\overrightarrow{v_{S}}\cdot \overrightarrow{\tau _{i}}\rangle\nonumber\\
\hspace{2mm} &=& - g\langle\phi_{D,h}-\phi_{D,H},\overrightarrow{v_{S}}\cdot \overrightarrow{n_S}\rangle
\hspace{3mm} \hspace{9.4mm} \forall ~\overrightarrow{v_S}\in \mathbf{X}_{S,h},\\
\label{2errFE-3}\ \
b_{S}(\overrightarrow{u_{S,h}}-\overrightarrow{u_S^h},q)&=&0 \hspace{49.5mm} \forall ~q\in Q_{S,h}.
\end{eqnarray}

It is interesting to discover the newly constructed scheme \emph{TGDDM2} seems to be related with the parameters $\delta_S, \delta_D$ in the second step. Only imposing the Dirichlet traces of the coarse solutions in the fine grid problems leads to the cancellation of the terms related to these two parameters in the residual equations above, hence, the present \emph{TGDDM2} only depends on the parameters in the first step for solving the coarse mesh problems. 

The further error analysis based on equations (\ref{2errFE-1})-(\ref{2errFE-3}) can be easily derived by the similar way with \cite{Mu}. Actually, we can select $\psi = \phi_{D,h}-\phi_D^h \in X_{D,h}$ in (\ref{2errFE-1}), and introduce an auxiliary system:
\begin{eqnarray*}
-\triangle \gamma &=& 0 \hspace{22mm} \mathrm{in} \ \Omega_S,\\
\gamma &=& \phi_{D,h}-\phi_D^h \hspace{8mm} \mathrm{on} \ \Gamma,\\
\gamma &=& 0 \hspace{22mm} \mathrm{on}\ \Gamma_S,
\end{eqnarray*}
where $\gamma \in H^1(\Omega_S)$ is the harmonic extension of $\phi_{D,h}-\phi_D^h$ to the Stokes region. Then, following the analysis in \cite{Mu}, we can get
$||\phi_{D,h}-\phi_{D}^h||_1 \lesssim \hspace{0.5mm} H^{3}$,
and taking $\overrightarrow{v_{S}} = \overrightarrow{u_{S,h}}-\overrightarrow{u_S^h} \in \mathbf{X}_{S,h}$ in (\ref{2errFE-2}) and $q = p_{S,h}-p_S^h \in Q_{S,h}$ in (\ref{2errFE-3}), we can directly obtain
$||\overrightarrow{u_{S,h}}-\overrightarrow{u_S^h}||_1 \lesssim \hspace{0.5mm} H^{\frac{5}{2}}$.
Then, utilizing the inf-sup or LBB condition on $\Omega_S$, we can yield
$||p_{S,h}-p_S^h|| \lesssim \hspace{0.5mm} H^{\frac{5}{2}}$
Further utilizing the triangle inequality, we can finally derive the error estimates under the relation of mesh scale $H=h^{2/3}$:
\begin{eqnarray}\label{result}
||\phi_{D}-\phi_{D}^h||_1 \leq C \hspace{0.5mm} h^{2},\hspace{5mm}
||\overrightarrow{u_{S}}-\overrightarrow{u_S^h}||_1
+||p_{S}-p_S^h|| \leq C \hspace{0.5mm} h^{\frac{5}{3}},
\end{eqnarray}
where $(\overrightarrow{u_{S}^h}, p_{S}^h; \phi_{D}^h) \in (\mathbf{X}_{S,h}, Q_{S,h}; {X}_{D,h})$ and $(\overrightarrow{u_{S}}, p_{S}; \phi_{D}) \in (\mathbf{X}_{S}, Q_{S}; {X}_{D})$ are solutions to
\emph{TGDDM2} and the original coupled Stokes-Darcy problem, respectively. Noting that  the estimate $||\phi_{D}-\phi_{D}^h||_1$ for the Darcy hydraulic head  is improved to the optimal convergence result of $O(h^2)$ here. This will be verified by several numerical experiments in the next section.

If we want to further improve the error estimate in the free fluid region, with a $1/3$ higher order than the result in (\ref{result}),  more rigorous estimates should be used, see for instance  \cite{Hou16}, in which Hou assumed the sufficient smoothness of the interface $\Gamma$ and introduced an auxiliary problem to achieve the proof of the optimal estimate. This result  is also valid for our scheme \emph{TGDDM2}. To this end, we reach the optimal error estimates for \emph{TGDDM2}, $i.e.$,
\begin{eqnarray}\label{opterr}
||\phi_{D}-\phi_{D}^h||_1 \leq C \hspace{0.5mm} h^{2},\hspace{5mm}
||\overrightarrow{u_{S}}-\overrightarrow{u_S^h}||_1
+||p_{S}-p_S^h|| \leq C \hspace{0.5mm} h^{2}.
\end{eqnarray}

\begin{rem}\label{remark2}
We can extend the error analysis for the TGDDMs above to other finite element spaces, such as the well-known MINI elements ($P1b-P1$) for the Stokes region, and the matched $P1$ elements for the Darcy model. Under such choices of the finite element spaces, the discrete trace space ${Z}_{h}$ on the interface $\Gamma$ should be modified as the space of the $P1$ elements, however, the following relations are still true:
\begin{eqnarray*}
{Z}_{h}= ~\mathbf{X}_{S,h}|_{\Gamma} \cdot \overrightarrow{n_{S}}= ~{X}_{D,h}|_{\Gamma}.
\end{eqnarray*}
Following the idea of \cite{Chen}, the scheme of \emph{DDM-Chen} with $P1b-P1-P1$ elements will  admit similar converge results up to minor modification of numerical analysis. Consequently, we can also demonstrate the error estimates between the exact  and finite element solutions in the form of
\begin{eqnarray*}
||\phi_{D}-\phi_{D}^h||_1 \leq C \hspace{0.5mm} h,\hspace{5mm}
||\overrightarrow{u_{S}}-\overrightarrow{u_S^h}||_1
+||p_{S}-p_S^h|| \leq C \hspace{0.5mm} h.
\end{eqnarray*}
\end{rem}

\section{Numerical Experiments}

In this section, we conduct several numerical experiments to illustrate the accuracy and efficiency of the proposed TGDDMs algorithms by simulating the coupled Stokes-Darcy fluid flow model.  In the first numerical test, we achieve the optimal convergence accuracy for the newly proposed \emph{TGDDM1} or \emph{TGDMM2} algorithms for an analytical solution. The second example demonstrates the flow speed, streamlines, conservation of mass, and CPU performance in a conceptual domain. Moreover, we simulate the coupling of the ``3D Shallow Water" system with the Darcy equation to present the complicated flow characteristics.  The finite element spaces are constructed by well-known \emph{P2-P1} elements for the Stokes problem and the matched \emph{P2} elements for the Darcy problem. All the numerical tests are executed by the open-source software FreeFEM++ \cite{F18}.

The stopping criteria for the iterative process of \emph{DDM-Chen} or the first step of the proposed TGDDMs, referred to as the maximal number $N$ of iterations, is usually selected as a fixed tolerance of $10^{-6}$  between two successive iterative solutions of the velocities and piezometric heads in the sense of $L^2$-norm, $i.e.$,
\begin{eqnarray*}
\Bigl(||\overrightarrow{u^{n+1}_{S,H}}-\overrightarrow{u_{S,H}^n}||^2+||\phi_{D,H}^{n+1}-\phi_{D,H}^{n}||^2 \Bigr)^{1/2}\leq 10^{-6}.
\end{eqnarray*}

\subsection{Experiment 1}
The first testing example with exact solution is adapted from \cite{Chen}  to verify the predicted error estimates of ``Remark \ref{remark}", and  check the feasibility of \emph{TGDDM2}. The global domain   $\overline{\Omega}$ consists of the free fluid flow region $\Omega_{S}= [0,\pi]\times[0,1]$ and the porous medium region $\Omega_{D}=[0,\pi]\times[-1,0]$, including the the interface  $\Gamma= \{ 0 \leq x \leq \pi, y=0\}$.
The exact solution is selected as:
\begin{eqnarray*}
\overrightarrow{u_S}=[v'(y) \cos x, v(y) \sin x]^{T}, \hspace{5mm} p_S=0,
\hspace{5mm} \phi_D=(e^{y}-e^{-y}) \sin x,
\end{eqnarray*}
where $v(y)$ denotes any function satisfying the boundary conditions $v(0)=-2k$ and $v'(0)=0$, respectively. Here we select $v(y)=-2k+\frac{k}{\pi^2} \sin^2 (\pi y)$. On the other hand, the above exact solution of such functions  satisfy the Beavers-Joseph-Saffman interface conditions. For the computational convenience, we assume $z=0$, and  other physical parameters $ \nu, g, k$ and $\alpha$ to be $ 1.0 $.

\begin{table}[!h]
\caption{The convergence performances  for the proposed \emph{TGDDM1}, \emph{TGDDM2} algorithms  with different $\delta_S$ and $\delta_D$ by using $ P2-P1-P2 $ finite elements triple.} \label{errTGDDM-E1}\tabcolsep
0pt \vspace*{-10pt}
\par
\begin{center}
\def\temptablewidth{1.0\textwidth}
{\rule{\temptablewidth}{1pt}}
\begin{tabular*}{\temptablewidth}{@{\extracolsep{\fill}}cc||ccc||c|c|c||c|c|c}
\hline
\multirow{2}*{$\delta_S$} & \multirow{2}*{$\delta_D$} & \multirow{2}*{$ H $} & \multirow{2}*{$N$} & \multirow{2}*{$ h $} & \multicolumn{3}{c||}{\emph{TGDDM1}} & \multicolumn{3}{c}{\emph{TGDDM2}}\\
\cline{6-11}
& & & & & $\frac{||\overrightarrow{u_S}-\overrightarrow{u_{S}^h}||_{1}}
{||\overrightarrow{u_S}||_{1}}$ \hspace{1mm} &$\frac{||p_S-p_{S}^h||}{||p_S||}$ \hspace{2mm} & $\frac{||\phi_D-\phi_D^h||_1}{||\phi_D||_1}$ \hspace{1mm} & $\frac{||\overrightarrow{u_S}-\overrightarrow{u_{S}^h}||_{1}}
{||\overrightarrow{u_S}||_{1}}$ \hspace{1mm} & $\frac{||p_S-p_{S}^h||}{||p_S||}$ \hspace{2mm} & $\frac{||\phi_D-\phi_D^h||_1}{||\phi_D||_1}$ \\
\hline\hline
2  &          2  & $\frac{1}{4} $  & 74  & $\frac{1}{8}  $ & 0.2226120 & 0.0386038 & 0.0310740 & 0.2226370 & 0.0382719 & 0.0310642 \\
   &             & $\frac{1}{9} $  & 110 & $\frac{1}{27} $ & 0.0265787 & 0.0028930 & 0.0033393 & 0.0265605 & 0.0027112 & 0.0033308 \\
   &             & $\frac{1}{16}$  & 136 & $\frac{1}{64} $ & 0.0051409 & 0.0004530 & 0.0006324 & 0.0051391 & 0.0004311 & 0.0006319 \\
   &             & $\frac{1}{25}$  & 154 & $\frac{1}{125}$ & 0.0013372 & 0.0001078 & 0.0001634 & 0.0013368 & 0.0001015 & 0.0001634 \\
   &             & $\frac{1}{36}$  & 168 & $\frac{1}{216}$ & 0.0004507 & 0.0000375 & 0.0000573 & 0.0004504 & 0.0000352 & 0.0000572 \\

\hline
\multicolumn{5}{c||}{Rate} & 1.99 & 1.93 & 1.92 & 1.99 & 1.94 & 1.92 \\
 \hline\hline
1  &          1  & $\frac{1}{4} $  & 64  & $\frac{1}{8}  $ & 0.2226080 & 0.0383720 & 0.0310927 & 0.2226370 & 0.0382710 & 0.0310639 \\
   &             & $\frac{1}{9} $  & 92  & $\frac{1}{27} $ & 0.0265639 & 0.0027403 & 0.0033477 & 0.0265606 & 0.0027115 & 0.0033303 \\
   &             & $\frac{1}{16}$  & 101 & $\frac{1}{64} $ & 0.0051394 & 0.0004345 & 0.0006329 & 0.0051390 & 0.0004296 & 0.0006319 \\
   &             & $\frac{1}{25}$  & 104 & $\frac{1}{125}$ & 0.0013364 & 0.0000952 & 0.0001631 & 0.0013362 & 0.0000928 & 0.0001630 \\
   &             & $\frac{1}{36}$  & 104 & $\frac{1}{216}$ & 0.0004478 & 0.0000327 & 0.0000550 & 0.0004465 & 0.0000325 & 0.0000549 \\
\hline
\multicolumn{5}{c||}{Rate} & 2.00 & 1.95 & 1.99 & 2.00 & 1.92 & 1.99 \\
\hline\hline
$\frac{1}{2}$& 1 & $\frac{1}{4} $  & 17  & $\frac{1}{8}  $ & 0.2226171 & 0.0383043 & 0.0310931 & 0.2226370 & 0.0382714 & 0.0310645 \\
   &             & $\frac{1}{9} $  & 17  & $\frac{1}{27} $ & 0.0265603 & 0.0027045 & 0.0033485 & 0.0265606 & 0.0027112 & 0.0033309 \\
   &             & $\frac{1}{16}$  & 17  & $\frac{1}{64} $ & 0.0051391 & 0.0004303 & 0.0006328 & 0.0051390 & 0.0004295 & 0.0006317 \\
   &             & $\frac{1}{25}$  & 17  & $\frac{1}{125}$ & 0.0013359 & 0.0000871 & 0.0001629 & 0.0013359 & 0.0000866 & 0.0001628 \\
   &             & $\frac{1}{36}$  & 17  & $\frac{1}{216}$ & 0.0004462 & 0.0000305 & 0.0000544 & 0.0004462 & 0.0000303 & 0.0000544 \\
\hline
\multicolumn{5}{c||}{Rate} & 2.00 & 1.92 & 2.01 & 2.00 & 1.92 & 2.00 \\
\hline\hline
$\frac{1}{3}$& 1 & $\frac{1}{4} $  & 21  & $\frac{1}{8}  $ & 0.2226231 & 0.0382893 & 0.0310931 & 0.2226370 & 0.0382714 & 0.0310645 \\
   &             & $\frac{1}{9} $  & 21  & $\frac{1}{27} $ & 0.0265601 & 0.0027016 & 0.0033485 & 0.0265606 & 0.0027113 & 0.0033309 \\
   &             & $\frac{1}{16}$  & 21  & $\frac{1}{64} $ & 0.0051390 & 0.0004296 & 0.0006328 & 0.0051390 & 0.0004295 & 0.0006317 \\
   &             & $\frac{1}{25}$  & 21  & $\frac{1}{125}$ & 0.0013359 & 0.0000868 & 0.0001629 & 0.0013359 & 0.0000866 & 0.0001628 \\
   &             & $\frac{1}{36}$  & 21  & $\frac{1}{216}$ & 0.0004455 & 0.0000303 & 0.0000544 & 0.0004462 & 0.0000304 & 0.0000543 \\
\hline
\multicolumn{5}{c||}{Rate} & 2.01 & 1.92 & 2.01 & 2.00 & 1.92 & 2.01 \\
\hline\hline
\end{tabular*}%
\end{center}
\end{table}
Firstly, we check the convergence of TGDDMs, by solving the coupled Stokes-Darcy problem with the proposed \emph{TGDDM1} or \emph{TGDMM2} on the uniform triangular meshes with size $H=\{1/4,1/9,1/16,1/25,1/36\}$ and $h=H^{3/2}=\{1/8,1/27,1/64,1/125,1/216\}$. The approximate errors are displayed in Table \ref{errTGDDM-E1}, for the velocity of free fluid flow in $H^1$-norm, the pressures of Stokes in $L^2$-norm, and the hydraulic heads of Darcy in $H^1$-norm. As expected, all the approximate errors achieve the optimal orders of $O(h^2)$ for the proposed TGDDMs algorithms. Moreover, the approximate accuracy hold the Remarks  \ref{remark} and  \ref{remark2} and obtain the similar convergence order as Chen et al. \cite{Chen}. On the other hand,   DDM iterative step $N$ in the process of computation on each coarse grid indicate that  the convergence rate is $(1-Ch)$ while $\delta_S=\delta_D$ and  $h$-independent while $\delta_S<\delta_D$. Besides, optimal error orders  are confirmed when $\delta_S < \delta_D$ for $H=h^{2/3}$, supports the Remark \ref{remark} for \emph{TGDDM1} and the result (\ref{opterr}) for \emph{TGDDM2}.



Secondly, we record the CPU time to simulate the proposed TGDDMs algorithms and \emph{DDM-Chen} in Table \ref{CPUtime-E1}   for the comparison purpose. As expected, the proposed \emph{TGDDM1} and  \emph{TGDDM2} algorithms consume less CPU time than the \emph{DDM-Chen}. Besides, the  iterative numbers $ N $ for the three schemes   \emph{DDM-Chen}, \emph{TGDDM1} and \emph{TGDDM2} verify the theoretical results.   The above numerical results illustrate the computational efficiency and exclusive features of our methods.

\begin{table}[!h]
\caption{The DDM iterative step and CPU performer of \emph{DDM-Chen}, \emph{TGDDM1} and \emph{TGDDM2}  by using $ P2-P1-P2 $ finite elements triple.} \label{CPUtime-E1}\tabcolsep
0pt \vspace*{-10pt}
\par
\begin{center}
\def\temptablewidth{1\textwidth}
{\rule{\temptablewidth}{0.9pt}}
\begin{tabular*}{\temptablewidth}{@{\extracolsep{\fill}}ccc|cc|cc|cc}
\hline
\multirow{2}*{$\delta_S$}& \multirow{2}*{$\delta_D$}  & \multirow{2}*{$ h $ \hspace{2mm}} &\multicolumn{2}{c|}{\emph{DDM-Chen}} & \multicolumn{2}{c|}{\emph{TGDDM1}} & \multicolumn{2}{c}{\emph{TGDDM2}}\\
\cline{4-9}
& & & $N$ & CPU\hspace{2mm} & $N$ & CPU & $N$ & CPU \\
\hline
1& 1 & $\frac{1}{8}  $ \hspace{2mm} & 86  &  4.233\hspace{2mm}    & 64   & 1.890  & 64  & 2.193  \\
 &   & $\frac{1}{27} $ \hspace{2mm} & 104 &  37.654\hspace{2mm}   & 92   & 6.773  & 92  & 8.307  \\
 &   & $\frac{1}{64} $ \hspace{2mm} & 105 &  204.799\hspace{2mm}  & 101  & 23.137 & 101 & 22.101 \\
 &   & $\frac{1}{125}$ \hspace{2mm} & 106 &  902.407\hspace{2mm} & 104  & 53.426 & 104 & 53.858 \\
 &   & $\frac{1}{216}$ \hspace{2mm} &  -- &     --    & 104  & 131.924\hspace{2mm} & 104 & 130.292 \\
\hline
$\frac{1}{3}$& 1 & $\frac{1}{8}  $ \hspace{2mm} & 21 &  0.971\hspace{2mm}   & 21  & 0.563  & 21 & 0.550 \\
   &             & $\frac{1}{27} $ \hspace{2mm} & 21 &  7.986\hspace{2mm}   & 21  & 1.579  & 21 & 1.702 \\
   &             & $\frac{1}{64} $ \hspace{2mm} & 21 &  42.980\hspace{2mm}  & 21  & 4.860  & 21 & 4.677 \\
   &             & $\frac{1}{125}$ \hspace{2mm} & 21 &  159.014\hspace{2mm} & 21  & 13.781 & 21 & 12.856\\
   &             & $\frac{1}{216}$ \hspace{2mm} & -- &    --\hspace{2mm}    & 21  & 39.571 & 21 & 37.093 \\
 \hline
\end{tabular*}%
\end{center}
\end{table}

\begin{table}[!h]
\caption{Approximate accuracy and CPU time in the first and second steps for \emph{TGDDM2} and \emph{CTG} by using $P2-P1-P2$ finite elements triple.} \label{ctg}\tabcolsep
0pt \vspace*{-10pt}
\par
\begin{center}
\def\temptablewidth{1.0\textwidth}
{\rule{\temptablewidth}{0.95pt}}
\begin{tabular*}{\temptablewidth}{@{\extracolsep{\fill}}c|ccccc|cccc}
\hline
\multirow{2}*{Algorithms} & \multirow{2}*{$ H $} & \multicolumn{4}{c|}{Step 1 (coarse mesh)} & \multicolumn{4}{c}{Step 2 (fine mesh $h=H^{3/2}$)}\\
\cline{3-10}
 & & $\frac{||\overrightarrow{u_S}-\overrightarrow{u_{S,H}}||_{1}}
{||\overrightarrow{u_S}||_{1}}$ \hspace{1mm} &$\frac{||p_S-p_{S,H}||}{||p_S||}$ \hspace{2mm} & $\frac{||\phi_D-\phi_{D,H}||_1}{||\phi_D||_1}$ \hspace{1mm} & CPU & $\frac{||\overrightarrow{u_S}-\overrightarrow{u_{S}^h}||_{1}}
{||\overrightarrow{u_S}||_{1}}$ \hspace{1mm} & $\frac{||p_S-p_{S}^h||}{||p_S||}$ \hspace{2mm} & $\frac{||\phi_D-\phi_D^h||_1}{||\phi_D||_1}$ & CPU \\
\hline\hline
  & $\frac{1}{16}$ & 0.0799951 & 0.0086573 & 0.0094345 & 2.347\hspace{0.5mm} &\hspace{0.5mm} 0.0051390\hspace{1.5mm} & 0.0004295\hspace{1.5mm} & 0.0006317\hspace{1.5mm} & 1.803 \\
  & $\frac{1}{25}$ & 0.0332764 & 0.0027174 & 0.0040821 & 5.298\hspace{0.5mm} &\hspace{0.5mm} 0.0013359\hspace{1.5mm} & 0.0000866\hspace{1.5mm} & 0.0001628\hspace{1.5mm} & 7.146 \\
  \emph{TGDDM2} & $\frac{1}{36}$ & 0.0157737 & 0.0014256 & 0.0018632 & 10.59\hspace{0.5mm} &\hspace{0.5mm} 0.0004462\hspace{1.5mm} & 0.0000324\hspace{1.5mm} & 0.0000544\hspace{1.5mm} & 22.43 \\
  ($\delta_D=1.0$ & $\frac{1}{49}$ & 0.0081499 & 0.0005610 & 0.0009248 & 21.35\hspace{0.5mm} &\hspace{0.5mm} 0.0001788\hspace{1.5mm} & 0.0000113\hspace{1.5mm} & 0.0000216\hspace{1.5mm} & 61.05 \\
  $\delta_S=\frac{1}{2}\delta_D$) & $\frac{1}{64}$ & 0.0051390 & 0.0004286 & 0.0006210 & 32.65\hspace{0.5mm} & --\hspace{1.5mm} & --\hspace{1.5mm} & 0.0000097\hspace{1.5mm} & -- \\
  & $\frac{1}{81}$ & 0.0031685 & 0.0002209 & 0.0003767 & 53.80\hspace{0.5mm} & --\hspace{1.5mm} & --\hspace{1.5mm} & 0.0000049\hspace{1.5mm} & -- \\
  & $\frac{1}{100}$ &0.0020913 & 0.0001448 & 0.0002483 & 83.51\hspace{0.5mm} & --\hspace{1.5mm} & --\hspace{1.5mm} & --\hspace{1.5mm} & -- \\
\hline
Rate & & 1.97 & 2.00 & 1.98 &  & 1.98 & 2.28 & 1.93 & \\
\hline\hline
  & $\frac{1}{16}$ & 0.0865251 & 0.0074120 & 0.0095117 & 0.736\hspace{0.5mm} &\hspace{0.5mm} 0.0051390\hspace{1.5mm} & 0.0004297\hspace{1.5mm} & 0.0006323\hspace{1.5mm} & 1.151 \\
  & $\frac{1}{25}$ & 0.0332054 & 0.0031460 & 0.0039961 & 1.992 \hspace{0.5mm} &\hspace{0.5mm} 0.0013359\hspace{1.5mm} & 0.0000865\hspace{1.5mm} & 0.0001634\hspace{1.5mm} & 4.616 \\
  & $\frac{1}{36}$ & 0.0174951 & 0.0033676 & 0.0020405 & 5.605\hspace{0.5mm} &\hspace{0.5mm} 0.0004451\hspace{1.5mm} & 0.0000953\hspace{1.5mm} & 0.0001257\hspace{1.5mm} & 17.39 \\
  \emph{CTG} & $\frac{1}{49}$ & 0.0091535 & 0.0042908 & 0.0010872 & 16.65\hspace{0.5mm} &\hspace{0.5mm} 0.0001792\hspace{1.5mm} & 0.0001435\hspace{1.5mm} & 0.0001107\hspace{1.5mm} & 63.22 \\
  & $\frac{1}{64}$ & 0.0058313 & 0.0055461 & 0.0006438 & 37.53\hspace{0.5mm} & --\hspace{1.5mm} & --\hspace{1.5mm} & 0.0000859\hspace{1.5mm} & -- \\
  & $\frac{1}{81}$ & 0.0046892 & 0.0070227 & 0.0005392 & 301.2\hspace{0.5mm} & --\hspace{1.5mm} & --\hspace{1.5mm} & 0.0002123\hspace{1.5mm} & -- \\
  & $\frac{1}{100}$ &0.0049351 & 0.0093033 & 0.0008602 & 465.7\hspace{0.5mm} & --\hspace{1.5mm} & --\hspace{1.5mm} & --\hspace{1.5mm} & -- \\
\hline
\multirow{2}*{Rate} & & 2.10$_{(\frac{49}{36})}$ & 1.92$_{(\frac{25}{16})}$ & 1.96$_{(\frac{64}{49})}$ & & \multirow{2}*{1.97} & 2.39$_{(\frac{125}{64})}$ & 2.02$_{(\frac{125}{64})}$ & \\
     & & -0.24 & -1.34 & -2.22 & & & -0.89 & -2.56 & \\
\hline\hline
\end{tabular*}%
\end{center}
\end{table}
In  Table \ref{ctg}, we  compare the approximate errors and CPU performance between the algorithms \emph{TGDDM2} and \emph{CTG} by running the first step computation  on the coarse mesh and the second step on the fine mesh, respectively. For this purpose, we apply \emph{GMRES} solver to compute the coupled system on the coarse mesh for \emph{CTG}. Here the notation `--' means that the implementation of our present computer is out of memory. The results indicate that \emph{TGDDM} and \emph{CTG} both have optimal convergence orders and much similar approximate accuracy when the fine mesh size is not too small. Table \ref{ctg} illustrate that the Stokes problem is out of memory if we consider  $h\leq \frac{1}{512}$ (corresponding to $h=H^{3/2}$, $H\leq\frac{1}{64}$ ). On the other hand, Darcy system can still be solved successfully until $h\leq \frac{1}{1000}$ ( $H\leq\frac{1}{100}$ ). Meanwhile, \emph{CTG} may become unstable when solving finer grid problems, due to the failure of solving the coupled system at the first step. Besides, Table \ref{ctg}  demonstrate that \emph{CTG} can solve coarser mesh problems much fast than TGDDMs when  $H$ is bigger .  However,  \emph{TGDDM2} can save more CPU time compared with \emph{CTG} when $H$ is smaller.

Furthermore,  we demonstrate  the \textit{log-log} plots of the relatively approximate errors for velocity, pressure and hydraulic head  in Fig. \ref{p1b} for the both cases of $\delta_S=\delta_D$ and $\delta_S<\delta_D$ by using $P1b-P1-P1$ finite elements triple to validate the  Remark \ref{remark2}. From the figure, we can intuitively observe that the optimal convergence order  $O(h)$ is achieved for all these three quantities, which confirm the conclusion in Remark \ref{remark2}.

\begin{figure}[htbp]
\centering
\subfigure[ \emph{TGDDM1} ]{
		\includegraphics[width=60mm,height=60mm]{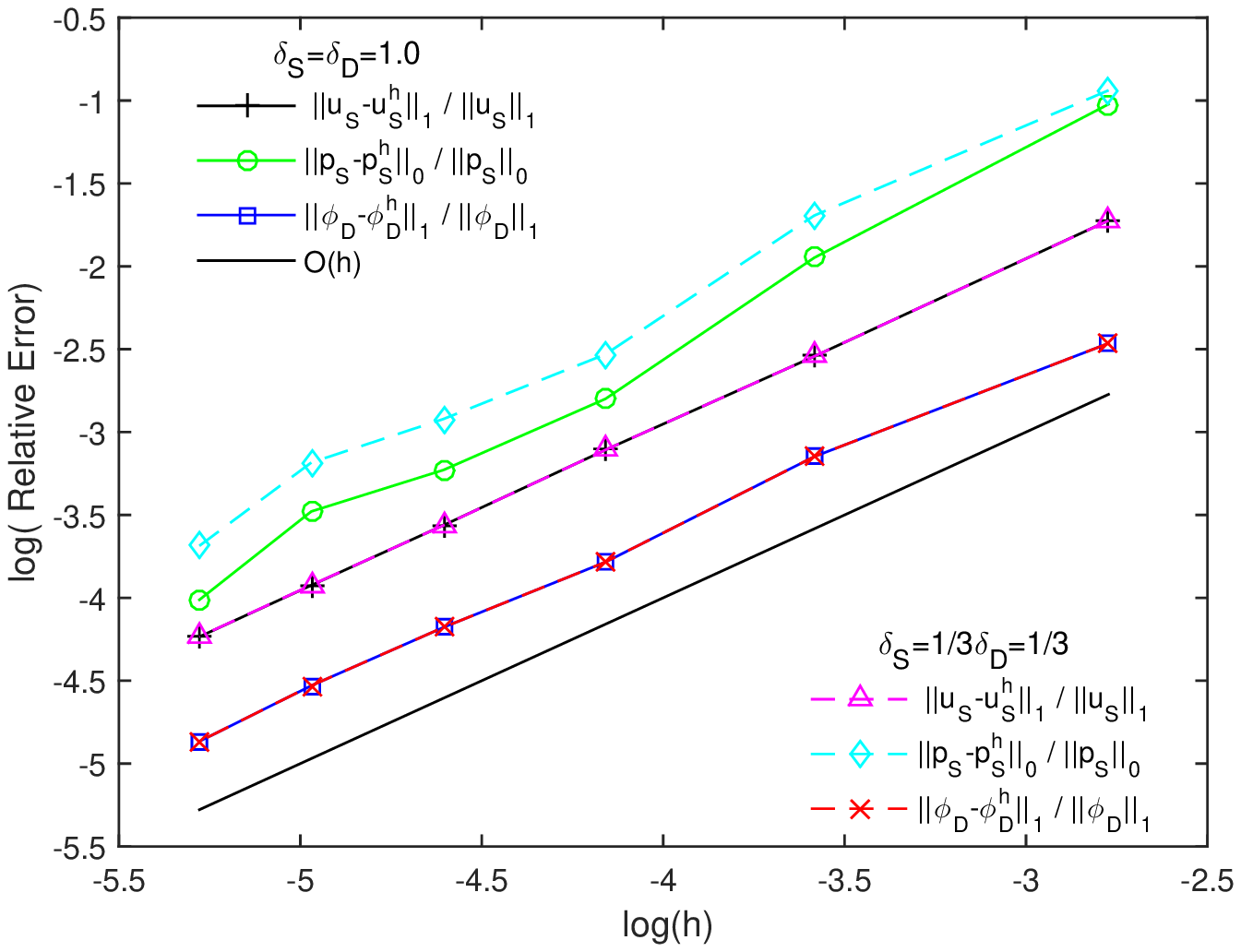}}
	\quad
	\subfigure[ \emph{TGDDM2}]{
		\includegraphics[width=60mm,height=60mm]{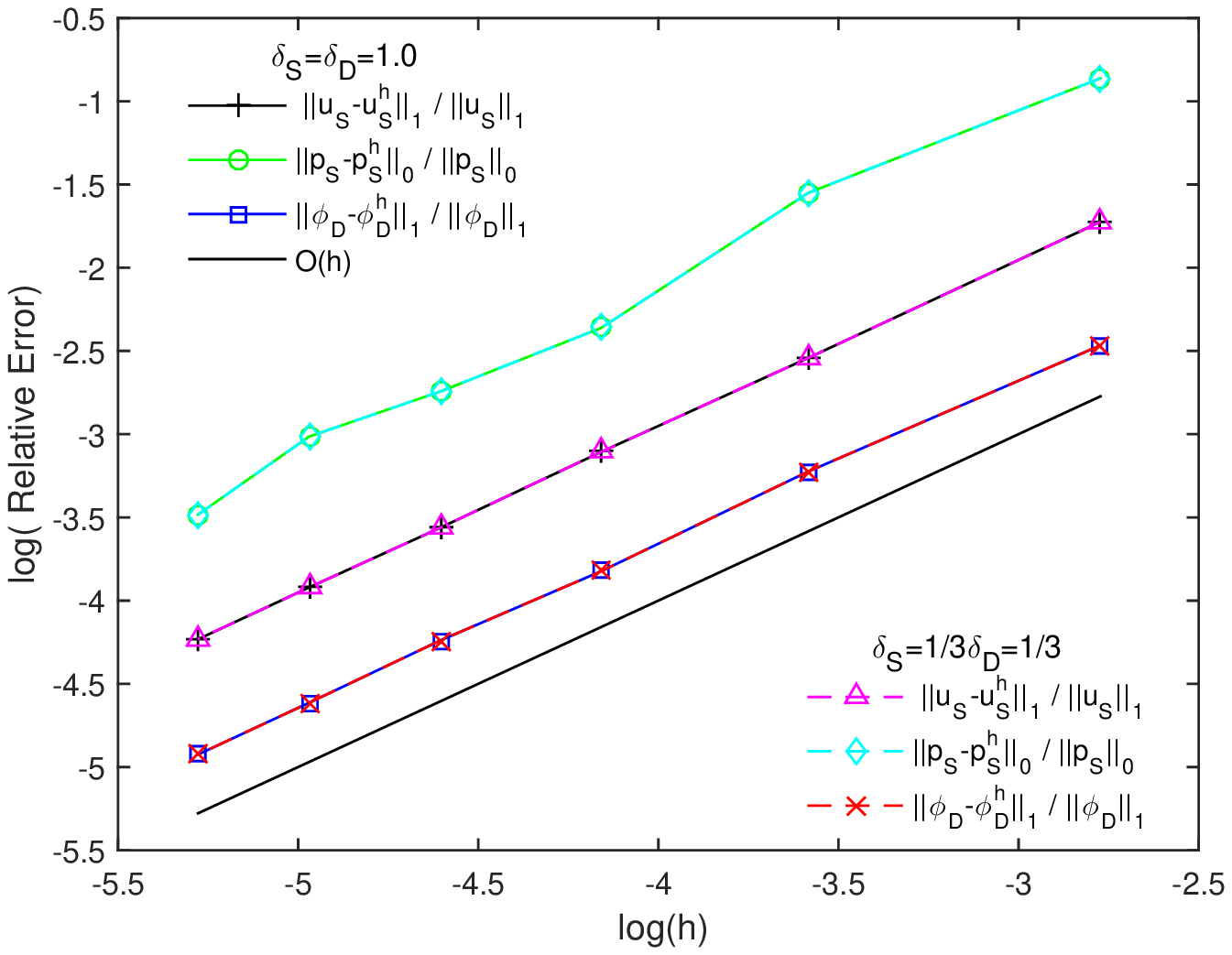}}
\caption{Error plots for \emph{TGDDMs} with different $\delta_S$ and $\delta_D$ by using $P1b-P1-P1$ finite elements.}\label{p1b}
\end{figure}

Finally, we compare the approximate accuracy of the proposed \emph{TGDDM1}  with  recently developed ``Two-level Optimized Schwarz method (\emph{TLOSM}): Algorithm $ 2.1 $ \cite{Gander}" in Table \ref{errosm} using $P1b-P1-P1$ finite elements triple. The \emph{TLOSM} algorithm  execute iteratively  several steps on the fine grid to solve the decoupling problems, and then computing the residual equations on the coarse grid to correct the  fine grid solutions. By following Gander et al. \cite{Gander}, we consider the coarse and fine meshes with relationship $h=H/2$, the fine grid iteration numbers $n_1=n_2=2$ and the optimized parameters $s_1=100, s_2=1/50$. On the other hand, we set $\delta_S=s_1$ and $\delta_D=s_2$ to simulate the proposed algorithm for the comparison purpose. As expected, both algorithms achieve optimal convergence  orders  but \emph{TGDDM1} can save  significant amount of CPU time \emph{TLOSM}. 

\begin{table}[!h]
\caption{The approximate accuracy and CPU performance of \emph{TGDDM1} algorithm and \emph{TLOSM} algorithm with $\delta_S=s_1=100$ and $\delta_D=s_2=1/50$ under $P1b-P1-P1$ finite elements triple.} \label{errosm}\tabcolsep
0pt \vspace*{-10pt}
\par
\begin{center}
\def\temptablewidth{1.0\textwidth}
{\rule{\temptablewidth}{1pt}}
\begin{tabular*}{\temptablewidth}{@{\extracolsep{\fill}}ccc||c|c|c|c||ccc||c|c|c|c}
\hline
\multirow{2}*{$ H $} & \multirow{2}*{$N$} & \multirow{2}*{$ h $} & \multicolumn{4}{c||}{\emph{TGDDM1}} & \multirow{2}*{$ H $} & \multirow{2}*{$ h $} & \multirow{2}*{$N$} & \multicolumn{4}{c}{\emph{TLOSM} ($h=H/2$)}\\
\cline{4-7}
\cline{11-14}
& & & $\frac{||\overrightarrow{u_S}-\overrightarrow{u_{S}^h}||_{1}}
{||\overrightarrow{u_S}||_{1}}$ \hspace{1mm} &$\frac{||p_S-p_{S}^h||}{||p_S||}$ \hspace{2mm} & $\frac{||\phi_D-\phi_D^h||_1}{||\phi_D||_1}$ \hspace{1mm} & CPU & & & & $\frac{||\overrightarrow{u_S}-\overrightarrow{u_{S}^h}||_{1}}
{||\overrightarrow{u_S}||_{1}}$ \hspace{1mm} & $\frac{||p_S-p_{S}^h||}{||p_S||}$ \hspace{2mm} & $\frac{||\phi_D-\phi_D^h||_1}{||\phi_D||_1}$ & CPU \\
\hline
$\frac{1}{4} $ & 9 & $\frac{1}{16}   $ & 0.178326 & 0.420358 & 0.084880 & 0.47 & $\frac{1}{8}  $ & $\frac{1}{16}  $ & 5 & 0.182883 & 0.662469 & 0.080170 & 1.60 \\
$\frac{1}{6} $ & 9 & $\frac{1}{36}   $  & 0.078727 & 0.210421 & 0.039834 & 1.94 & $\frac{1}{18} $ & $\frac{1}{36}  $ & 5 & 0.079088 & 0.310457 & 0.035791 & 7.26 \\
$\frac{1}{8} $ & 10 & $\frac{1}{64}   $  & 0.044738 & 0.093331 & 0.021860 & 4.04 & $\frac{1}{32} $ & $\frac{1}{64}  $ & 5 & 0.045216 & 0.129701 & 0.020445 & 21.26\\
$\frac{1}{10} $ & 10 & $\frac{1}{100} $  & 0.028379 & 0.063680 & 0.014426 & 8.38 & $\frac{1}{50} $ & $\frac{1}{100} $ & 5 & 0.028648 & 0.094091 & 0.013191 & 52.27\\
$\frac{1}{12} $ & 10 & $\frac{1}{144} $  & 0.019818 & 0.048571 & 0.009923 & 17.49 & $\frac{1}{72} $ & $\frac{1}{144} $ & 5 & 0.019830 & 0.052905 & 0.009002 & 112.81\\
$\frac{1}{14} $ & 10 & $\frac{1}{196} $  & 0.014515 & 0.030350 & 0.007270 & 29.70 & $\frac{1}{98} $ & $\frac{1}{196} $ & 5 & 0.014529 & 0.035051 & 0.006659 & 224.78\\
\hline
\multicolumn{3}{c||}{Rate} & 1.01 & 1.53 & 1.01 & & \multicolumn{3}{c||}{Rate} & 1.01 & 1.33 & 0.98 &  \\
\hline\hline
\end{tabular*}%
\end{center}
\end{table}

\section{Conclusions}
In this contribution, we propose two novel optimized two-grid domain decomposition algorithms to solve the coupled Stokes-Darcy multi-physics system numerically. We derive the stability and convergence analysis for the proposed TGDDMs algorithm rigorously. Moreover, several numerical experiments are preform to validate the proposed algorithms and demonstrate the accuracy and efficiency. The optimal convergence order is achieved for both schemes. The flow speed, streamlines, pressure contour, conservation of mass, and complicated flow characteristics are illustrated on the 2D and 3D geometrical setup for the Stokes-Darcy fluid flow model. All the numerical experiments indicate that the proposed TGDDMs algorithms consume very little CPU time and save computational resources than the classical two-grid methods. On the other hand, the newly optimized TGDDMs scheme can simulate large-scale multi-domain/multi-physics models for finer grids with more realistic physical parameters better than the DDMs or traditional two-grid methods. In the future, we can extend these newly optimized methods to solve other types of multi-domain, multi-physical models, such as the Stokes-Darcy system with Beavers-Joseph interface conditions, and the Navier-Stokes/Darcy system in the future.



\section{Acknowledgements}
The authors  would like to thank Gander M.J. and Vanzan T. to improve our numerical experiments.

\end{document}